\newif\ifpreprint
\spnewtheorem{corollary}[theorem]{Corollary}{\bfseries}{\itshape}
\spnewtheorem{definition}[theorem]{Definition}{\bfseries}{\itshape}
\spnewtheorem{assumption}[theorem]{Assumption}{\bfseries}{\itshape}
\spnewtheorem{lemma}[theorem]{Lemma}{\bfseries}{\itshape}
\spnewtheorem{theorem}[theorem]{Theorem}{\bfseries}{\itshape}
\spnewtheorem{proposition}[theorem]{Proposition}{\bfseries}{\itshape}
\spnewtheorem{remark}[theorem]{Remark}{\itshape}{\itshape}
\numberwithin{equation}{section}
\numberwithin{theorem}{section}
  \numberwithin{equation}{section} 
  \declaretheorem[Refname={Theorem,Theorems}]{theorem}
  \numberwithin{theorem}{section} 
  \declaretheorem[style=definition,numberlike=theorem,Refname={Definition,Definitions}]{definition}
  \declaretheorem[style=definition,numberlike=theorem,Refname={Remark,Remarks}]{remark}
  \declaretheorem[numberlike=theorem,Refname={Lemma,Lemmas}]{lemma}
  \declaretheorem[name=Corollary,numberlike=theorem,Refname={Corollary,Corollaries}]{corollary}
  \declaretheorem[name=Proposition,numberlike=theorem,Refname={Proposition,Propositions}]{proposition}
\pgfplotsset{compat=1.3}
\tikzset{
>=stealth',
  punktchain/.style={
    draw=black, very thick,
    text width=10em, 
    minimum height=3em, 
    text centered, 
    on chain},
  tuborg/.style={decorate},
  tubnode/.style={midway, right=2pt},
}
\newcommand{\bsx}{{\boldsymbol{x}}}
\newcommand{\rd}{\,\mathrm{d}}
\newcommand{\e}{\mathrm{e}} 
\newcommand{\Z}{\mathbb{Z}}
\newcommand{\T}{\mathbb{T}}
\newcommand{\R}{\mathbb{R}}
\newcommand{\N}{\mathbb{N}}
\newcommand{\I}{\mathbb{I}}
\newcommand{\calI}{\mathcal{I}}
\newcommand{\calH}{\mathcal{H}}
\newcommand{\calO}{\mathcal{O}}
\definecolor{darkgreen}{RGB}{0,130,70}
\definecolor{darkorange}{RGB}{180,60,0}
\newcommand{\Aa}{A^\dagger}  
\newcommand{\Ab}{A^*}       
\title{Construction of Optimal Algorithms for Function Approximation in Gaussian Sobolev Spaces}
\author{
Yuya Suzuki\textsuperscript{1} and Toni Karvonen\textsuperscript{2} \vspace{0.5cm}
\\
\kern-1cm\textsuperscript{1}\emph{Department of Mathematics and Systems Analysis, Aalto University,}\\ \emph{ P.O. Box 11100, 00076, Helsinki, Finland} \vspace{0.1cm}
\\
\kern-1cm\textsuperscript{2}\emph{School of Engineering Science, Lappeenranta–Lahti University of Technology,}\\ \emph{Yliopistonkatu 34, 53850 Lappeenranta, Finland}
}
\title{Construction of Optimal Algorithms for Function Approximation in Gaussian Sobolev Spaces}
\titlerunning{Optimal Algorithms for Function Approximation in Gaussian Sobolev Spaces}
\author{
Yuya Suzuki, Toni Karvonen}
\authorrunning{Yuya Suzuki, Toni Karvonen}
\institute{
              Yuya Suzuki\at
              Department of Mathematics and Systems Analysis, Aalto University, P.O. Box 11100, FI-00076 Aalto, Finland\\
			  E-mail:
             {\tt yuya.suzuki@aalto.fi}
             \\
             Toni Karvonen\at
              School of Engineering Science, Lappeenranta–Lahti University of Technology LUT, Yliopistonkatu 34, 53850 Lappeenranta, Finland\\
			  E-mail:
             {\tt toni.karvonen@lut.fi}
}
\date{\today}
\begin{document}
\maketitle

\begin{abstract}
\noindent
This paper studies function approximation in Gaussian Sobolev spaces over the real line and measures the error in a Gaussian-weighted $L^p$-norm.
We construct two linear approximation algorithms using $n$ function evaluations that achieve the optimal or almost optimal rate of worst-case convergence in a Gaussian Sobolev space of order $\alpha$.
The first algorithm is based on scaled trigonometric interpolation and achieves the optimal rate $n^{-\alpha}$ up to a logarithmic factor.
This algorithm can be constructed in almost-linear time with the fast Fourier transform.
The second algorithm is more complicated, being based on spline smoothing, but attains the optimal rate $n^{-\alpha}$.
\end{abstract}

\section{Introduction}

This paper is concerned with approximating functions in Gaussian-weighted Sobolev spaces over the real line.
We are interested in the \emph{sampling recovery problem}, constructing linear approximation algorithms $A_n(f)$ that recover a function $f \colon \mathbb{R} \to \mathbb{R}$ using $n$ function evaluations.
Let $\rho(x) \coloneqq (2\pi)^{-1/2} \, \e^{-x^2/2}$ be the Gaussian density function and $p \in [1, \infty)$.
We measure the approximation error in the weighted $L^p(\mathbb{R})$ norm
\begin{align} \label{eq:error-measure}
\|f-A_n(f)\|_{L_\rho^p} \coloneqq \left(\int_\R \left|f(x)-[A_n(f)](x)\right|^p \rho(x) \rd x  \right)^{1/p}
\end{align}
and assume that $f$ is an element of the Gaussian Sobolev space
\begin{align} \label{eq:gaussian-sobolev-space}
W^{\alpha,q}_{\rho} \coloneqq \bigg\{f\in L_\rho ^q\; \bigg| \;\|f\|_{W^{\alpha,q}_{\rho}} \coloneqq \bigg(\sum_{\tau=0}^\alpha \int_\R |f^{(\tau)}(x)|^q \rho(x) \rd x  \bigg)^{1/q} < \infty \bigg\}
\end{align}
for some $\alpha \in \mathbb{N}$ and $p < q < \infty$.
This space consists of functions whose weak derivatives up to order $\alpha$ are in $L_\rho^q$. Since every element of this space admits a continuous representative, in what follows we always consider the continuous representative so that the function evaluation makes sense.

Function approximation in high dimension is an important task.
The sampling recovery problem in Gaussian Sobolev spaces has been recently studied by a number of authors~\cite{LPE2023,DK2023,GHRR2023}.
In this paper, we study sampling recovery in the one-dimensional Gaussian Sobolev spaces $W_\rho^{\alpha,q}$ and construct explicit linear algorithms that attain the optimal or almost optimal rate of worst-case convergence.
It has been shown by Kuo, Plaskota, and Wasilkowski \cite[Theorem~3]{KPW2016}, and later independently studied by D{\~u}ng and Nguyen~\cite[Theorem~3.3]{DK2023}, that the best possible convergence rate of sampling recovery in $W_\rho^{\alpha,q}$ for $p<q$ is of order $n^{-\alpha}$, and we show two alternative methods achieving this rate with explicit computational cost. 
As one-dimensional algorithms often serve as a foundation for their high-dimensional counterparts, our results motivate further study of such optimal algorithms in a high-dimensional setting.

The first algorithm we propose in Section~\ref{sec:trig-interpolation}, scaled trigonometric interpolation, achieves the optimal convergence rate up to a logarithmic factor. This algorithm is nothing but a trigonometric interpolation on a suitably truncated interval, and thus can be constructed by Fast Fourier Transform (FFT) with $\calO(n\log_2 n)$ computational cost and $\calO(n)$ memory usage. Trigonometric interpolation is a popular numerical tool due to the applicability of FFT. In spectral methods, trigonometric bases are often used even if the original partial differential equation is defined on the real line (e.g., \cite{K2007,TCN2009}).
This requires suitable treatment of boundary conditions and truncation of the real line.
The truncation interval is chosen to be \emph{large enough}, but analysis of the error that this treatment causes is often omitted.
Our results on trigonometric interpolation give theoretical foundation for such methods, and suggest how to choose the truncation interval depending on the number of evaluation points.

In Section~\ref{sec:optimal} we propose the second algorithm, which is based on spline smoothing.
The algorithm converges with the optimal rate $n^{-\alpha}$ and is similar to the numerical integration algorithms in~\cite{Karvonen2021,DK2023}.
This is achieved by partitioning a truncated real line into unit intervals and constructing an independent spline smoother on each interval.
As spline smoothers are known to attain the optimal rate of convergence in classical Sobolev spaces on bounded sets, having the number of evaluation points allocated to an interval decrease exponentially fast as a function of the distance of the interval to the origin ensures that the algorithm attains the optimal rate.
The slightly improved convergence rate of this algorithm in comparison to scaled trigonometric interpolation is offset by its higher computational complexity.
Figures~\ref{fig:points} and~\ref{fig:interpolation} compare the two algorithms that we propose.
 
Numerical integration is closely related to function approximation. Here we mention results related to numerical integration over Gaussian Sobolev spaces for $q=2$. 
For the deterministic worst-case error, it has been shown that the rate $n^{-\alpha}$ is optimal by Dick et al. \cite[Theorem~1]{DILP2018}, meaning that any deterministic linear quadrature cannot achieve a convergence rate faster than $n^{-\alpha}$. We note that their result is shown for so-called Hermite spaces, but the norm equivalence between Hermite spaces and Gaussian Sobolev spaces with $q=2$ is shown in \cite[Lemma~6]{DILP2018} and \cite[Lemma~2.1]{KSG2022}. Recently, D{\~u}ng and Nguyen \cite[Theorem~2.3]{DK2023} showed that this rate is unchanged for general $q\in(1,\infty)$.
The measure being Gaussian, Gauss--Hermite quadrature is a natural choice.
However, Kazashi, Goda and one of the present authors~\cite{KSG2022} have shown that Gauss--Hermite quadrature achieves the deterministic worst-case rate of only $n^{-\alpha/2}$. An upper bound of order $n^{-\alpha/2}$ was essentially obtained already by Mastroianni and Monegato \cite{MM1994} and the matching lower bound was proved in \cite[Theorem~3.2]{KSG2022}. In contrast, a suitably truncated trapezoidal rule is shown to achieve the optimal rate $n^{-\alpha}$ up to a logarithmic factor in \cite[Theorem~4.5]{KSG2022}.
The subsequent work of these authors \cite{GKS2023} considers the randomized setting, i.e., algorithms are allowed to be random and their quality is measured in the worst-case root-mean-squared error (RMSE).
Therein it is proved that no nonlinear adaptive algorithm can converge with a worst-case RMSE rate faster than $n^{-\alpha-1/2}$, and a randomized trapezoidal rule is shown to attain this rate.
The strategy used for obtaining upper bounds of trapezoidal rules both in \cite{KSG2022,GKS2023}
is based on introducing an auxiliary periodic function. This strategy is first used in \cite{NS2023}, and we adapt this strategy for function approximation in Section~\ref{sec:trig-interpolation}.

Upon completing this research in the paper, we learned the paper \cite{D2025} by D\~{u}ng. The methods presented therein achieve the optimal rate of convergence, however they depend  on various indices. We emphasize that our methods presented in this paper offer computationally cheap alternatives, especially the trigonometric interpolation. As an additional feature, our methods can be constructed without knowledge of the exact smoothness of the target function. Section~\ref{sec:opt} concludes this paper by briefly discussing known lower bounds, optimality, and other features of our algorithms.

\paragraph{Notation.}
Throughout the paper, we denote the set of all positive integers by $\N$.
Unless otherwise specified, in what follows we always assume that $p$ and $q$ in~\eqref{eq:error-measure} and~\eqref{eq:gaussian-sobolev-space} satisfy $1\le p <q < \infty$. Over a given interval $[a,b]$, we denote unweighted and weighted $L^p$ spaces by $L^p([a,b])$ and $L_{\rho}^p([a,b])$, respectively, and likewise for unweighted and weighted Sobolev spaces. 
We use subscripts to denote the dependency of non-negative constants on various parameters. For example, $C_{\alpha, \beta}$ would be a constant that depends \emph{only} on $\alpha$ and $\beta$. Otherwise we always specify the parameters that a constant depends or does not depend on.

\begin{figure}[t]
    \centering
    \includegraphics[width=\textwidth]{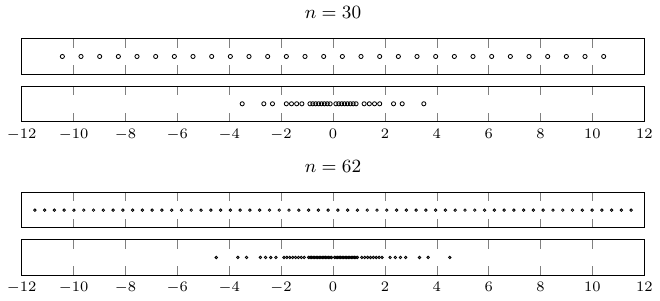}
    \caption{Comparison of the evaluation points used by the algorithms $\Aa_n$ and $\Ab_n$ from Sections~\ref{sec:trig-interpolation} and~\ref{sec:optimal}, respectively, for $n = 30$ and $n = 62$. The uniform points used by $\Aa_n$ in~\eqref{eq:Aa-def}, with $T$ given in~\eqref{eq:T-selection} for $\alpha = 4$, $q=2$, $p=1$ and $\varepsilon = 0.25$, are displayed in the top panels. The bottom panels display one possible set of points that can be used to construct the algorithm $\Ab_n$ of Corollary~\ref{cor:spline-convergence}. The density of these points decreases exponentially fast when moving away from the origin. The algorithm $\Ab_n$ converges with the optimal rate $n^{-\alpha}$, while the rate of convergence of $\Aa_n$ is optimal up to a logarithmic factor.}
    \label{fig:points}
\end{figure}

\begin{figure}[t]
    \includegraphics[width=0.49\textwidth]{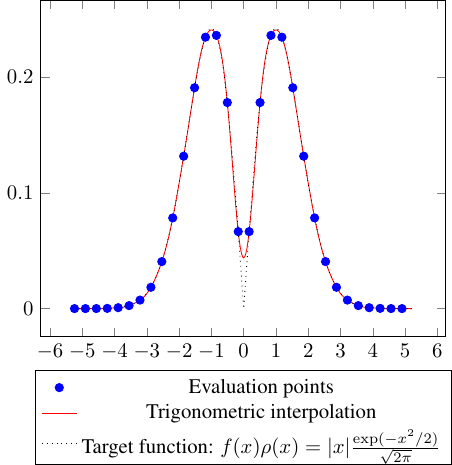}
    \includegraphics[width=0.49\textwidth]{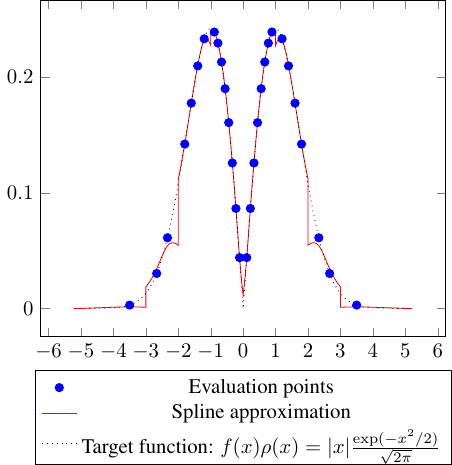}
    \caption{A comparison of trigonometric interpolation from Section~\ref{sec:trig-interpolation} and spline approximation from Section~\ref{sec:optimal} when $f(x) = \lvert x \rvert$, $p=1$ and $q=2$. We plot the weighted approximations $\rho \Aa_n(f)$ with $n=31$ and $\varepsilon=0.25$ (left) and $\rho \Ab_n(f)$ with $n=30$ (right). The spline approximation has been constructed using a Matérn kernel of order $\gamma = 3/2$ [see~\eqref{eq:matern-kernel}], which is a reproducing kernel for a Sobolev space of order two. Observe how the spline approximation is more accurate close to the origin than the trigonometric interpolant.}
    \label{fig:interpolation}
\end{figure}
 
\section{Trigonometric interpolation} \label{sec:trig-interpolation}
This section introduces our first algorithm based on trigonometric interpolation and analyses its $L^p_{\rho}$-error in terms of the number $n$ of function evaluation. 
The algorithm is denoted by $\Aa_n:W^{\alpha,q}_\rho \to L^{p}_\rho$. 
\begin{definition}[Trigonometric interpolation with cutoff $\Aa_n$]
Let $g \coloneqq f\rho^{1/p}$. We define the approximation algorithm 
\begin{equation} \label{eq:Aa-def}
    \Aa_n (f) \coloneqq \rho^{-1/p}\calI_n(f\rho^{1/p})=\rho^{-1/p}\calI_n(g),
\end{equation}
with $\calI_n$ being the trigonometric interpolation on $[-T,T]$:
\begin{align*}
\calI_n(g)
\coloneqq
 \begin{cases}
   \sum_{k=-\lfloor n/2\rfloor}^{\lfloor n/2\rfloor} \widehat{g}_a (k) \phi_k^{[-T,T]}(x)& \text{if $x\in [-T,T],$}\\ 0 & \text{otherwise,}
 \end{cases}
\end{align*}
where $\phi_k^{[-T,T]}(x) \coloneqq \exp(2\pi i k(x+T)/(2T))/\sqrt{2T}$ are orthonormal Fourier basis on $L^2([-T,T])$.
The coefficients $\widehat{g}_a(k)$ are calculated using equidistant points as
\[
\widehat{g}_a (k) \coloneqq \frac{1}{n}\sum_{j=0}^{n-1} g(\xi_j) \overline{\phi_k^{[-T,T]}}(\xi_j), \quad \text{ with } \quad \xi_j \coloneqq \frac{2T}{n}j-T,\ j=0,\dots,n-1.
\]
\label{def:An}
\end{definition}

We will bound the $L^p_\rho$ error of the above algorithm by decomposing it into two parts:
\begin{equation}
\begin{split}
\|f-\Aa_n(f)\|_{L_\rho^p} \le{}& 
\left(\int_{\R\setminus [-T,T]} \left|f(x)-[\Aa_n(f)](x)\right|^p \rho(x) \rd x  \right)^{1/p}  \\
&+
\left(\int_{-T}^{T} \left|f(x)-[\Aa_n(f)](x)\right|^p \rho(x) \rd x  \right)^{1/p}
.
\label{eq:error-decomp}
\end{split}
\end{equation}
For the first term, since we have $\Aa_n(f)=0$ outside the interval $[-T,T]$, we simply bound $(\int_{\R\setminus [-T,T]} |f(x)|^p \rho(x) \rd x )^{1/p} $ using the decay of $g=f\rho^{1/p}$. In the following lemma, we obtain this decay of $g$ and its derivatives.

\begin{lemma}[Decay of the function $g=f\rho^{1/p}$]\label{lem:decay} 
    Let $1\le p <q  < \infty$, $\alpha\in\N$ and $f\in W^{\alpha,q}_{\rho}$.
    Then for arbitrary $\varepsilon\in(0,\frac{q-p}{pq})$ and $g=f\rho^{1/p}$, the following quantity is bounded: 
        \[    
    \|g\|_{\textup{decay}} \coloneqq
    \sup _{\substack{x \in \mathbb{R} \\ \tau \in\{0, \ldots, \alpha-1\}}}\big|\rho^{1/q-1/p+\varepsilon}(x) g^{(\tau)}(x)\big|\le C_{p,q,\alpha,\varepsilon} \|f\|_{W^{\alpha,q}_{\rho}}<\infty .
    \]
\end{lemma}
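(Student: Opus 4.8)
The plan is to reduce the lemma to a pointwise bound on the Gaussian-weighted derivatives $f^{(j)}$ for $j\le\alpha-1$, and then to prove that bound with the fundamental theorem of calculus together with Hölder's inequality.

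First I would expand $g^{(\tau)}=(f\rho^{1/p})^{(\tau)}$ with the Leibniz rule. Since $\rho^{1/p}(x)=(2\pi)^{-1/(2p)}\e^{-x^2/(2p)}$, each derivative satisfies $(\rho^{1/p})^{(m)}=P_m\,\rho^{1/p}$ for a polynomial $P_m$ of degree $m$ (a rescaled Hermite polynomial), so that
\[
\rho^{1/q-1/p+\varepsilon}(x)\,g^{(\tau)}(x)=\sum_{j=0}^{\tau}\binom{\tau}{j}P_{\tau-j}(x)\,f^{(j)}(x)\,\rho^{1/q+\varepsilon}(x).
\]
Writing $\rho^{1/q+\varepsilon}=\rho^{\varepsilon/2}\rho^{\,s}$ with $s\coloneqq 1/q+\varepsilon/2$ and using that $x\mapsto P_{\tau-j}(x)\rho^{\varepsilon/2}(x)$ is bounded (a polynomial against a Gaussian), the claim reduces to proving
\[
\sup_{x\in\R}\,\big|f^{(j)}(x)\big|\,\rho^{\,s}(x)\le C_{p,q,\alpha,\varepsilon}\,\|f\|_{W^{\alpha,q}_\rho}\qquad\text{for every }j\in\{0,\dots,\alpha-1\}.
\]
The decisive feature is that $sq=1+q\varepsilon/2>1$, which holds precisely because $\varepsilon>0$.

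For this pointwise bound I would set $h\coloneqq f^{(j)}$, which is locally absolutely continuous with weak derivative $h'=f^{(j+1)}\in L^q_\rho$ because $j+1\le\alpha$, and study $G\coloneqq |h|^q\rho^{sq}$. Using $\rho'=-x\rho$,
\[
G'(x)=q\,|h(x)|^{q-1}\operatorname{sgn}(h(x))\,h'(x)\,\rho^{sq}(x)-sq\,x\,|h(x)|^q\rho^{sq}(x).
\]
Since $sq>1$, both $\rho^{sq}(x)\le C\rho(x)$ and $|x|\rho^{sq}(x)\le C\rho(x)$ with constants depending only on $q,\varepsilon$; applying Hölder's inequality with exponents $q/(q-1)$ and $q$ to the first term then yields
\[
\int_\R|G'|\le C\big(\|h'\|_{L^q_\rho}\|h\|_{L^q_\rho}^{q-1}+\|h\|_{L^q_\rho}^q\big)<\infty,
\]
and likewise $\int_\R G\le C\|h\|_{L^q_\rho}^q<\infty$. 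As $G\ge 0$ is integrable with $G'\in L^1(\R)$, the limits $\lim_{x\to\pm\infty}G(x)$ exist and must vanish, so $G(x)=-\int_x^{+\infty}G'$ for $x\ge0$ (and the analogous formula for $x\le0$) gives $G(x)\le\int_\R|G'|$ for all $x$. Bounding $\|h\|_{L^q_\rho},\|h'\|_{L^q_\rho}\le\|f\|_{W^{\alpha,q}_\rho}$ and taking the $q$-th root delivers the displayed pointwise estimate, and substituting back into the first step proves the lemma.

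The step I expect to be the main obstacle is justifying that $G$ vanishes at $\pm\infty$, since this is what legitimises writing $G$ as a tail integral of $G'$; it rests on the integrability of both $G$ and $G'$, which in turn is exactly where the inequality $sq>1$ (equivalently $\varepsilon>0$) is used to dominate $\rho^{sq}$ and $|x|\rho^{sq}$ by $\rho$. Note that only positivity of $\varepsilon$ enters here; the upper bound $\varepsilon<(q-p)/(pq)$ is not needed for boundedness of $\|g\|_{\textup{decay}}$, but rather ensures that the associated tail estimate for $\int_{\R\setminus[-T,T]}|f|^p\rho$ is useful.
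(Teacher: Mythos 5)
Your proof is correct, and its skeleton matches the paper's: both expand $g^{(\tau)}=(f\rho^{1/p})^{(\tau)}$ by the Leibniz rule so that the derivatives of the Gaussian factor become rescaled Hermite polynomials times $\rho^{1/p}$, and both absorb the resulting polynomial growth into a spare small power of $\rho$ (the paper uses $\rho^{\varepsilon}$, you use $\rho^{\varepsilon/2}$), which is exactly where $\varepsilon>0$ enters. The genuine difference is in how the sup bound is then obtained. The paper applies the Sobolev inequality $W^{1,q}(\R)\hookrightarrow L^\infty(\R)$ as a black box to the weighted function $F_\tau=\rho^{1/q-1/p+\varepsilon}g^{(\tau)}$, after verifying that both $F_\tau$ and $F_\tau'$ lie in $L^q(\R)$. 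You instead reduce to the cleaner claim $\sup_x |f^{(j)}(x)|\rho^s(x)\le C\|f\|_{W^{\alpha,q}_\rho}$ with $s=1/q+\varepsilon/2$, and prove it from scratch: setting $G=|f^{(j)}|^q\rho^{sq}$, you show $G,G'\in L^1(\R)$ (Hölder with exponents $q/(q-1)$ and $q$, plus the fact that $sq>1$ lets $\rho^{sq}$ and $|x|\rho^{sq}$ be dominated by $\rho$), deduce that $G$ vanishes at $\pm\infty$, and conclude $\sup G\le\int_\R|G'|$. This is in effect a self-contained proof of the needed weighted embedding — an Agmon-type argument via the fundamental theorem of calculus — so your route is more elementary and avoids citing the embedding theorem, at the cost of a little more bookkeeping (the chain rule for $|h|^q$ uses $q>1$, which holds since $q>p\ge 1$). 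Your closing observation is also correct and worth making explicit: only the positivity of $\varepsilon$ is used for the boundedness of $\|g\|_{\textup{decay}}$; the upper bound $\varepsilon<(q-p)/(pq)$ serves only to make the exponent $1/p-1/q-\varepsilon$ positive, so that boundedness of $\|g\|_{\textup{decay}}$ yields genuine decay in the tail estimate of Lemma~\ref{lemma:tail-bound}.
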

\begin{proof} First we note that $1/q-1/p+\varepsilon<0$.
        In the following, we use the Sobolev inequality for $F_\tau(x) \coloneqq \rho^{1/q-1/p+\varepsilon}(x) g^{(\tau)}(x)$. That is, from the boundedness of $\|F_\tau\|_{L^q(\R)}$ and $\|F_\tau '\|_{L^q(\R)}$ we deduce the boundedness of $\|F_\tau \|_{L^\infty(\R)}$.
        Let $H_\ell(x)$ be the $\ell$-th degree probabilist's Hermite polynomial,
        \[
        H_\ell(x)=\frac{(-1)^\ell}{\sqrt{\ell!}}\e^{x^2/2 } \frac{\rd^{\ell}}{\rd x^{\ell}}
        	\e^{-x^{2}/2}.
       \]
        First notice from the definition of Hermite polynomials that
   \[
   \frac{\mathrm{d}^\ell}{\mathrm{d} x^\ell} \rho(x)^{1/p}=p^{-\ell/2} (-1)^\ell {\sqrt{\ell !}} H_\ell\left(\frac{x}{\sqrt{p}}\right)  \rho(x)^{1/p}.
   \] 
Hence, for $\tau=0,\ldots,\alpha-1,$ by applying the product rule to $g^{(\tau)}$ we have 
\begin{align*}
    &\|F_\tau(x)\|_{L^q(\R)}\\
    &\le
    \sum_{\ell=0}^\tau \binom{\tau}{\ell} \|
    p^{-\ell/2} (-1)^\ell {\sqrt{\ell !}} H_\ell\left(\frac{x}{\sqrt{p}}\right)  \rho(x)^{1/p}
    \rho^{1/q-1/p+\varepsilon}(x) f^{(\tau-\ell)}(x)\|_{L^q(\R)}
    \\
    & \le
    \sum_{\ell=0}^\tau \binom{\tau}{\ell} {\sqrt{\ell !}}p^{-\ell/2}  \|
       H_\ell\left(\frac{x}{\sqrt{p}}\right)  \rho(x)^{1/p}
    \rho^{1/q-1/p+\varepsilon}(x) f^{(\tau-\ell)}(x)\|_{L^q(\R)}
    \\
    &\le
    \sum_{\ell=0}^\tau \binom{\tau}{\ell} {\sqrt{\ell !}}p^{-\ell/2}
      \sup_{x\in\R} \left| H_\ell\left(\frac{x}{\sqrt{p}} \right)\rho^{\varepsilon}(x)    \right|
    \left(\int_\R |f^{(\tau-\ell)}(x)|^q \rho(x) \rd x\right)^{1/q}
    \\
    &=
    \sum_{\ell=0}^\tau \binom{\tau}{\ell} {\sqrt{\ell !}}p^{-\ell/2}
    \sup_{x\in\R} \left| H_\ell\left(\frac{x}{\sqrt{p}} \right)\rho^{\varepsilon}(x)    \right|
    \|f^{(\tau-\ell)}(x)\|_{L^{q} _{\rho}}
    <\infty,
\end{align*}
where in the last line, $\|f^{(\tau-\ell)}(x)\|_{L^{q} _{\rho}}$ is bounded because $f\in W^{\alpha,q}_{\rho}$, and the supremum term is also bounded because $H_\ell$ has at most only polynomial growth.
At the same time, noting that $1/q-1/p+\varepsilon<0$, we have
\begin{align*}
&\|F_{\tau} '\|_{L^q(\R)} \\\le{}&
\|\rho^{1/q-1/p+\varepsilon}(x) g^{(\tau+1)}\|_{L^q(\R)}
+
\|(1/p-1/q-\varepsilon)x\rho^{1/q-1/p+\varepsilon}(x) g^{(\tau)}\|_{L^q(\R)}
\\
\le{}&
\sum_{\ell=0}^{\tau+1} \binom{\tau+1}{\ell} {\sqrt{\ell !}}p^{-\ell/2}
 \sup_{x\in\R} \left| H_\ell\left(\frac{x}{\sqrt{p}} \right)\rho^{\varepsilon}(x)    \right|
    \|f^{(\tau-\ell+1)}(x)\|_{L^{q} _{\rho}}\\
    &+
    (1/p-1/q-\varepsilon)\sum_{\ell=0}^\tau \binom{\tau}{\ell} {\sqrt{\ell !}}p^{-\ell/2}
    \sup_{x\in\R} \left|x H_\ell\left(\frac{x}{\sqrt{p}} \right)\rho^{\varepsilon}(x)    \right|
    \|f^{(\tau-\ell)}(x)\|_{L^{q} _{\rho}}
    < \infty,
\end{align*}
where in the last line, again we used $ \|f^{(\tau-\ell+1)}(x)\|_{L^{q} _{\rho}}\le\|f\|_{W^{\alpha,q}_{\rho}}<\infty$.
Thus we have $\|g\|_{\textup{decay}}<\infty$.
\qed
\end{proof}
For the second term in \eqref{eq:error-decomp}, we introduce a suitable auxiliary function $G$ and consider the bound
\begin{align}
    &\bigg(\int_{-T}^{T} |f(x)-[\Aa_n(f)](x)|^p \rho(x)  \rd x  \bigg)^{1/p}\nonumber \\={}& \bigg(\int_{-T}^{T} |g(x)-[\calI_n(g)](x)|^p   \rd x  \bigg)^{1/p} \nonumber \\
    ={}& \|g-\calI_n(g)\|_{L^p([-T,T])} \nonumber \\
    \begin{split}
    \leq{}& \|g-G\|_{L^p([-T,T])} + \|G-\calI_n(G)\|_{L^p([-T,T])} + \|\calI_n(G)-\calI_n(g)\|_{L^p([-T,T])}. \label{eq:error-dec-in}
    \end{split}
\end{align}
Lemma~\ref{lem:error-inside} will upper bound the three terms in~\eqref{eq:error-dec-in}. For that, we need the following Lemmas~\ref{lem:periodize} to~\ref{lem:interpolation-box}.

\begin{lemma}[Auxiliary periodic function and its properties]\label{lem:periodize}
Let $1\le p <q  < \infty$, $\alpha\in\N$ and $T > 0$.  For $f\in W^{\alpha,q}_{\rho} $, let $g=f \rho^{1/p}$ and define $G: [-T-d, T+d]\to \R$ by
\begin{align}\label{eq:periodize}
    G(x) \coloneqq g(x)-\sum_{\tau=1}^{\alpha}\frac{B^{[-T,T]}_{\tau}(x)}{\tau!}\left( \int_{-T}^{T}g^{(\tau)}(y)\rd y\right),
\end{align}
 with an arbitrarily small $d>0$, where $B^{[-T,T]}_{\tau}$ denotes the scaled Bernoulli polynomial of degree $\tau$ on $[-T,T]$, i.e.,
\[ B^{[-T,T]}_{\tau}(x)=(2T)^{\tau-1}B_{\tau}\left( \frac{x+T}{2T}\right)\]
with $B_{\tau}$ being the standard Bernoulli polynomial of degree $\tau$. Then, the auxiliary function $G$ is $(\alpha-1)$-times continuously differentiable with $G^{(\alpha-1)}$ being absolutely continuous on $[-T,T]$, and satisfies $G^{(\tau)}(-T)=G^{(\tau)}(T)$ for all $\tau=0,\ldots,\alpha-1$. Furthermore, the auxiliary function $G$ has its $\alpha$-th weak derivative $G^{(\alpha)}$ in $L^q([-T,T])$ which can be identified as a function over a $2T$-periodic torus $\T([-T,T))$ with
\[
\int_{\T([-T,T))} |G^{(\alpha)} (x)|^q \rd x < \infty.
\]
\end{lemma}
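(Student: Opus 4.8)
The function $G$ is designed as a Bernoulli-polynomial correction of $g$ whose only purpose is to cancel the mismatch between the derivatives of $g$ at the two endpoints $\pm T$, so that the corrected function glues into a smooth function on the $2T$-periodic torus. The plan rests on two algebraic facts about the scaled Bernoulli polynomials. The first is the derivative recursion $\frac{\rd}{\rd x}B^{[-T,T]}_\tau(x) = \tau\, B^{[-T,T]}_{\tau-1}(x)$, inherited from $B_\tau' = \tau B_{\tau-1}$ via the chain rule and the $(2T)^{\tau-1}$ scaling. The second is the endpoint identity $B^{[-T,T]}_j(T) - B^{[-T,T]}_j(-T) = (2T)^{j-1}\bigl(B_j(1)-B_j(0)\bigr) = \delta_{j,1}$, which says that all scaled Bernoulli polynomials take equal values at $-T$ and $T$ except the linear one.

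Before the endpoint computation I would settle the regularity of $g$, hence of $G$. Writing $g = f\rho^{1/p}$ and applying the product rule, each $g^{(\tau)}$ is a finite sum of terms of the form $(\rho^{1/p})^{(\ell)} f^{(\tau-\ell)}$; by the Hermite identity already recorded in the proof of Lemma~\ref{lem:decay}, every factor $(\rho^{1/p})^{(\ell)}$ is a polynomial times $\rho^{1/p}$ and is therefore $C^\infty$ and bounded on the compact interval. Since $f\in W^{\alpha,q}_\rho$ gives $f^{(\tau)}\in L^q_\rho$ and $\rho$ is bounded away from $0$ on $[-T,T]$, it follows that $g^{(\tau)}\in L^q([-T,T])$ for $\tau=0,\ldots,\alpha$; moreover $f^{(\alpha-1)}$ is absolutely continuous (its weak derivative $f^{(\alpha)}$ is locally integrable), so $g^{(\alpha-1)}$ is absolutely continuous and $g\in C^{\alpha-1}$. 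As the correction term is a $C^\infty$ polynomial, $G$ inherits exactly the claimed regularity.

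The heart of the proof is the endpoint computation, carried out for each $0\le k\le \alpha-1$. Differentiating the definition of $G$ exactly $k$ times and applying the recursion, the $\tau$-th summand contributes $\frac{1}{\tau!}\frac{\rd^k}{\rd x^k}B^{[-T,T]}_\tau = \frac{1}{(\tau-k)!}B^{[-T,T]}_{\tau-k}$ when $\tau\ge k$ and nothing otherwise. Forming the endpoint difference and invoking $B^{[-T,T]}_j(T)-B^{[-T,T]}_j(-T)=\delta_{j,1}$, every summand drops out except the one with $\tau-k=1$, which survives precisely because $k\le\alpha-1$ keeps $\tau=k+1$ within the range of summation, and whose coefficient collapses to $1$. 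This yields
\[
G^{(k)}(T) - G^{(k)}(-T) = \big(g^{(k)}(T) - g^{(k)}(-T)\big) - \int_{-T}^{T} g^{(k+1)}(y)\rd y.
\]
The fundamental theorem of calculus, legitimate because $g^{(k)}$ is absolutely continuous for $k\le\alpha-1$, identifies the integral with $g^{(k)}(T)-g^{(k)}(-T)$, so the right-hand side vanishes and $G^{(\tau)}(-T)=G^{(\tau)}(T)$ for all $\tau=0,\ldots,\alpha-1$.

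Finally, taking $k=\alpha$ in the same computation leaves only the $\tau=\alpha$ term, and since $B^{[-T,T]}_0\equiv (2T)^{-1}$ this gives $G^{(\alpha)}(x)=g^{(\alpha)}(x)-(2T)^{-1}\int_{-T}^{T} g^{(\alpha)}(y)\rd y$, namely $g^{(\alpha)}$ with its mean removed; since $g^{(\alpha)}\in L^q([-T,T])$, so is $G^{(\alpha)}$. Because the periodicity $G^{(\alpha-1)}(-T)=G^{(\alpha-1)}(T)$ just established makes the periodic extension of $G^{(\alpha-1)}$ absolutely continuous across the gluing point, no boundary jump term appears and $G^{(\alpha)}$ is genuinely the weak derivative of $G^{(\alpha-1)}$ on $\T([-T,T))$, with finite $L^q$-norm. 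I expect the main obstacle to be not the elegant Bernoulli cancellation itself but the surrounding regularity bookkeeping: one must confirm that $g$, built from the a priori only weakly differentiable factor $f$, really carries $\alpha-1$ classical derivatives with an absolutely continuous top one, since this is exactly what licenses the fundamental theorem of calculus above and guarantees that the torus identification produces no spurious boundary distribution.
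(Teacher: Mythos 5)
Your proof is correct and follows essentially the same route as the paper's: Bernoulli-polynomial algebra combined with the fundamental theorem of calculus, with the regularity of $g = f\rho^{1/p}$ (namely $g \in C^{\alpha-1}$ with $g^{(\alpha-1)}$ absolutely continuous and $g^{(\alpha)} \in L^q([-T,T])$) inherited from $f \in W^{\alpha,q}_\rho$ via the Hermite identity. The only difference is cosmetic and dual in nature: the paper uses the integral identity $\int_{-T}^{T} B^{[-T,T]}_{\tau}(x)\rd x = \delta_{\tau,0}$ to conclude $\int_{-T}^{T} G^{(\tau)}(x)\rd x = 0$ and then applies the fundamental theorem of calculus to $G$, whereas you use the equivalent endpoint identity $B^{[-T,T]}_{j}(T) - B^{[-T,T]}_{j}(-T) = \delta_{j,1}$ and apply the fundamental theorem of calculus to $g$; both give the matching boundary values, and your identification of $G^{(\alpha)}$ as $g^{(\alpha)}$ minus its mean, followed by the periodic gluing, matches the paper's final step exactly.
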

\begin{proof}
   The proof follows the strategy of \cite[Lemma~4.1]{KSG2022}, where the case with $p=1$ and $q=2$ was proved. 
   Recall that for $\tau,\tau' \in \N_0$ scaled Bernoulli polynomials satisfy 
   \[
   \frac{\mathrm{d}^{\tau'}}{\mathrm{d} x^{\tau'}} \frac{B^{[-T,T]}_\tau(x)}{\tau !} =\left\{\begin{array}{ll}
0 & \text { if } \tau^{\prime}>\tau, \\
\frac{B^{[-T,T]}_{\tau-\tau'}(x)}{\left(\tau-\tau'\right) !} & \text { if } \tau' \leq \tau
\end{array}\right.
   \]
   and
   \[
\int_{-T} ^T B_\tau^{[-T, T]}(x) \rd x=\left\{\begin{array}{ll}
1 & \text { if } \tau=0, \\
0 & \text { if } \tau \neq 0.
\end{array}\right.
   \]
   From these properties we have
\begin{align*}
&\int_{-T}^{T} G^{(\tau)}(x)\rd x =\int_{-T}^{T} g^{(\tau)}(x)\rd x - \left(\int_{-T}^{T} B^{[-T,T]}_{0}(x) \rd x \right)\left(\int_{-T}^{T} g^{(\tau)}(s)\rd s\right)  =0 ,
\end{align*}
for $\tau=1,\ldots,\alpha$. Since $f\in W^{\alpha,q}_{\rho}$, $g^{(\tau)}$ and $G^{(\tau)}$ are absolutely continuous on $(-T-d,T+d)$ for $\tau = 0,1,\ldots,\alpha-1$. By using the fundamental theorem of calculus, we now obtain
\begin{align}\label{eq:boundary-match}
&G^ {(\tau)}(-T)=G^ {(\tau)}(T)\quad \text{for }\  \tau=0,\ldots,\alpha-1.
\end{align}
   Next, we note that $G^{(\alpha)}$ and $g^{(\alpha)}$ only differ by the constant $\frac{1}{2T} \int_{-T}^T g^{(\alpha)}(y) \rd y$ on $[-T,T]$. Since $f^{(\alpha)}\in L^{q}_{\rho}$, we know $g^{(\alpha)}$ is in $L^q([-T,T])$.
   Due to matching boundary values $G^{(\alpha-1)}(-T)=G^{(\alpha-1)}(T)$ and absolute continuity of $G^{(\alpha-1)}$ on $(-T-\delta,T+\delta)$, 
   we have $\|G^{(\alpha)}\|_{L^p(\T([-T,T]))} < \infty $.
\qed \end{proof}

\begin{lemma}[Norm estimate on bounded intervals] \label{lem:g-norm-bound}
    Let $1\le p <q  < \infty$, $\alpha\in\N$, $a < b$ and $\varepsilon \in [0, q - p)$.
    Then
    \[
    \|g\|_{W^{\alpha,q} ([a, b])} \le C_{p,q,\alpha,\varepsilon} \, \rho^{\varepsilon/p}(c) \|f\|_{W^{\alpha,q}_{\rho}},
    \]
    where $c = \min_{x \in [a, b]} \lvert x \rvert$.
\end{lemma}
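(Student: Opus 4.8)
The plan is to differentiate $g=f\rho^{1/p}$ by the Leibniz rule, substitute the Hermite-polynomial form of the derivatives of $\rho^{1/p}$ already recorded in the proof of Lemma~\ref{lem:decay}, and then use two elementary facts that hold on the bounded interval $[a,b]$: that the Gaussian weight is monotone in $\lvert x\rvert$, so $\rho(x)\le\rho(c)$ for every $x\in[a,b]$, and that for any strictly positive exponent $s$ the map $x\mapsto\lvert H_\ell(x/\sqrt{p})\rvert^q\rho(x)^s$ is bounded on all of $\R$ by a constant independent of $a$ and $b$. The first fact is what produces the prefactor $\rho^{\varepsilon/p}(c)$, the second is what lets the Gaussian decay absorb the polynomial growth of the Hermite polynomials with a constant depending only on $p,q,\alpha,\varepsilon$, and keeping a single factor of $\rho$ untouched is what reconstructs the weighted norm $\|f\|_{W^{\alpha,q}_{\rho}}$.

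Concretely, I would first write, for each $\tau\in\{0,\dots,\alpha\}$,
\[
g^{(\tau)}(x)=\sum_{\ell=0}^{\tau}\binom{\tau}{\ell}\,p^{-\ell/2}(-1)^\ell\sqrt{\ell!}\,H_\ell\!\left(\tfrac{x}{\sqrt{p}}\right)\rho(x)^{1/p}f^{(\tau-\ell)}(x),
\]
using the identity from the proof of Lemma~\ref{lem:decay}, and apply Minkowski's inequality in $L^q([a,b])$ so that it suffices to bound each summand separately. Second, I would split the weight as $\rho(x)^{q/p}=\rho(x)^{\varepsilon/p}\,\rho(x)\,\rho(x)^{(q-p-\varepsilon)/p}$, bound $\rho(x)^{\varepsilon/p}\le\rho(c)^{\varepsilon/p}$ by monotonicity, and absorb $\lvert H_\ell(x/\sqrt{p})\rvert^q\rho(x)^{(q-p-\varepsilon)/p}$ into a finite supremum $M_{\ell,p,q,\varepsilon}$, which is finite precisely because the leftover exponent $(q-p-\varepsilon)/p$ is strictly positive under the hypothesis $\varepsilon<q-p$. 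Third, the remaining integral is $\int_a^b\lvert f^{(\tau-\ell)}(x)\rvert^q\rho(x)\,\rd x\le\|f\|_{W^{\alpha,q}_{\rho}}^q$; taking the $q$-th root and summing the finitely many terms over $\ell$ and $\tau$ then yields a bound of the claimed form, with a constant $C_{p,q,\alpha,\varepsilon}$ assembled from the binomial coefficients, the factorials $\sqrt{\ell!}$, the powers $p^{-\ell/2}$, and the suprema $M_{\ell,p,q,\varepsilon}$.

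The main obstacle, and the only place the hypotheses on $\varepsilon$ enter, is the budgeting of the surplus power $(q-p)/p$ of the weight in the second step: enough of it must be spent so that the residual Gaussian factor dominates the Hermite growth \emph{uniformly over all of} $\R$ (so that $M_{\ell,p,q,\varepsilon}$ is genuinely independent of the interval $[a,b]$), while the part converted into a prefactor must produce exactly the power of $\rho(c)$ appearing in the statement. Verifying that these two demands are simultaneously compatible with the stated range of $\varepsilon$ — in particular checking the borderline term $\ell=0$, where there is no Hermite growth, and tracking how the extracted power of $\rho(c)$ survives the final $q$-th root — is the step I would treat most carefully, since everything else reduces to the uniform Hermite bound and a finite sum once that bound is in hand.
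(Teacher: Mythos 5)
Your strategy---the Leibniz rule with the Hermite identity for the derivatives of $\rho^{1/p}$, Minkowski's inequality in $L^q([a,b])$, and a splitting of the surplus Gaussian weight into a part that absorbs the Hermite growth uniformly on $\R$ and a part bounded by its value at $c$---is exactly the paper's proof. But the step you yourself flagged as delicate is where the argument fails to deliver the stated bound: the extracted power of $\rho(c)$ does \emph{not} survive the final $q$-th root. With your splitting $\rho^{q/p}=\rho^{\varepsilon/p}\,\rho\,\rho^{(q-p-\varepsilon)/p}$, each term is bounded by
\begin{equation*}
\binom{\tau}{\ell}\sqrt{\ell!}\,p^{-\ell/2}\Bigl(M_{\ell,p,q,\varepsilon}\;\rho^{\varepsilon/p}(c)\int_a^b \lvert f^{(\tau-\ell)}(x)\rvert^q\rho(x)\rd x\Bigr)^{1/q},
\end{equation*}
so after taking the root the prefactor is $\rho^{\varepsilon/(pq)}(c)$, not $\rho^{\varepsilon/p}(c)$; since $\rho(c)<1$, this is strictly weaker than the claimed bound. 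To have $\rho^{\varepsilon/p}(c)$ survive the root you would need to extract $\rho^{\varepsilon q/p}(x)\le\rho^{\varepsilon q/p}(c)$ inside the integral, which leaves only $\rho^{(q-p-\varepsilon q)/p}$ to dominate $\lvert H_\ell\rvert^q$, and that supremum is finite on $\R$ only when $\varepsilon<(q-p)/q$---a strictly smaller range than the stated $\varepsilon\in[0,q-p)$.

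This is not a defect you could have repaired within the stated hypotheses, because for $\varepsilon\in\bigl((q-p)/q,\,q-p\bigr)$ the lemma as stated is actually false. Take $f_a(x)=\psi(x-a)$ for a fixed bump $\psi$ supported in $(0,\delta)$ and $[a,b]=[a,a+1]$, so $c=a$: then $\|g\|_{W^{\alpha,q}([a,a+1])}\ge\rho^{1/p}(a)\,\e^{-(\delta a+\delta^2/2)/p}\|\psi\|_{L^q}$ while $\rho^{\varepsilon/p}(a)\|f_a\|_{W^{\alpha,q}_{\rho}}\le\rho^{\varepsilon/p+1/q}(a)\|\psi\|_{W^{\alpha,q}}$, and when $1/p-\varepsilon/p-1/q<0$, i.e.\ $\varepsilon>(q-p)/q$, the ratio of the two sides grows like $\e^{\kappa a^2}$ as $a\to\infty$, so no constant independent of the interval can exist. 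You are in good company: the paper's own displayed proof has the same inconsistency (the Gaussian exponents inside its suprema do not match from line to line, and the factor $\rho^{\varepsilon q/p}(c)$ it pulls out before the $q$-th root is not actually available on the stated range of $\varepsilon$). Moreover, the damage is contained: what your argument genuinely proves---prefactor $\rho^{\varepsilon/(pq)}(c)$ for $\varepsilon\in[0,q-p)$, equivalently $\rho^{\varepsilon/p}(c)$ for $\varepsilon\in[0,(q-p)/q)$---is all the paper ever uses, since Lemma~\ref{lem:scaling-norm} invokes the case $\varepsilon=0$ and Proposition~\ref{prop:optimal-generic} only needs $\varepsilon<\frac{q-p}{pq}\le\frac{q-p}{q}$. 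So the fix is to state the conclusion you can actually reach (exponent $\varepsilon/(pq)$, or the restricted range of $\varepsilon$); as written, your final assertion that the computation ``yields a bound of the claimed form'' is the one step that fails.
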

\begin{proof}
We have
\begin{align*}
 &\|g\|_{W^{\alpha,q} ([a, b])}
 = \left( \sum_{\tau=0}^{\alpha} \int_{a}^b \lvert  g^{(\tau)} (x) \rvert^q \rd x \right)^{1/q}
 =  \left(\sum_{\tau=0}^{\alpha} \int_{a}^b \lvert  (f\rho^{1/p}))^{(\tau)} (x) \rvert^q \rd x \right)^{1/q}
\\
 &\le  \sum_{\tau=0}^{\alpha} \left( \int_{a}^b \bigg\lvert   \sum_{\ell=0}^\tau \binom{\tau}{\ell} (-1)^\ell {\sqrt{\ell !}}f^{(\tau-\ell)}(x) p^{-\ell/2} H_\ell\bigg(\frac{x}{\sqrt{p}}\bigg)\rho^{1/p}(x)  \bigg\rvert^q \rd x \right)^{1/q}
 \\
 &\le
 \sum_{\tau=0}^{\alpha} \sum_{\ell=0}^\tau  \binom{\tau}{\ell}  \left( \int_{a}^b \bigg\lvert f^{(\tau-\ell)}(x){\sqrt{\ell !}} p^{-\ell/2} H_\ell\bigg(\frac{x}{\sqrt{p}}\bigg)\rho^{1/p}(x) \bigg\rvert^q \rd x \right)^{1/q}
   \\
 &\le
 \sum_{\tau=0}^{\alpha} \sum_{\ell=0}^\tau  \binom{\tau}{\ell}  \left( \sup_{x\in[a,b]}\bigg\lvert {\sqrt{\ell !}} p^{-\ell/2} H_\ell\bigg(\frac{x}{\sqrt{p}} \bigg)\rho^{(q-p)/p}(x) \bigg\rvert^q  \int_{a}^b \lvert f^{(\tau-\ell)}(x) \rvert^q \rho(x) \rd x \right)^{1/q}
    \\
 &\leq \sum_{\tau=0}^{\alpha} \sum_{\ell=0}^\tau  \binom{\tau}{\ell}  \bigg( \sup_{x\in[a,b]}\bigg\lvert {\sqrt{\ell !}}p^{-\ell/2} H_\ell\bigg(\frac{x}{\sqrt{p}} \bigg)\rho^{(q-p-\varepsilon)/p}(x)  \bigg\rvert^q\\ &\quad \quad \kern35mm \times \rho^{\varepsilon q/p}(c)  \int_{a}^b \lvert f^{(\tau-\ell)}(x) \rvert^q \rho(x) \rd x \bigg)^{1/q}
 \\
 &\le
 C_{p,q,\alpha,\varepsilon} \, \rho^{\varepsilon/p}(c) \|f\|_{W^{\alpha,q}_{\rho}},
\end{align*}
where we used the facts that $H_\ell(\cdot/\sqrt{p}) \rho^{(q-p-\varepsilon)/p}$ is bounded on the real line and that $\rho(x)$ is decreasing in $\lvert x \rvert$.
\qed \end{proof}
For obtaining trigonometric interpolation error on $[-T,T]$, we make use of results from trigonometric interpolation on $[0,2\pi]$. For that sake, we need the following estimate for the effect of linear scaling $S_T:[0,2\pi]\to [-T,T]$ given by $S_T(x) \coloneqq Tx/\pi-T$.

\begin{lemma}[Boundedness of $\|g\circ S_T\|_{W^{\alpha,q}([0,2\pi])}$] \label{lem:scaling-norm}
    Let $1\le p <q < \infty$, $\alpha\in\N$, and $f\in W^{\alpha,q}_{\rho}$.
    For $g=f\rho^{1/p}$ we define $h \coloneqq g\circ S_T$. 
    Then, for $T\ge1$ we have
    \[
    \|h\|_{W^{\alpha,q} ([0,2\pi])}\le C_{p,q,\alpha} T^{\alpha-1/q} \|f\|_{W^{\alpha,q}_{\rho}}.
    \]
\end{lemma}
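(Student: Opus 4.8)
The plan is to reduce everything to the fixed interval $[-T,T]$ by the affine change of variables induced by $S_T$ and then invoke Lemma~\ref{lem:g-norm-bound}. Since $S_T$ is affine with $S_T'(x)=T/\pi$, the chain rule---valid for weak derivatives because $f\in W^{\alpha,q}_{\rho}$ ensures that $g$, and hence $h=g\circ S_T$, possesses weak derivatives up to order $\alpha$---gives $h^{(\tau)}(x)=(T/\pi)^{\tau}\,g^{(\tau)}(S_T(x))$ for $\tau=0,\dots,\alpha$. First I would substitute this into the definition of the Sobolev norm and perform the change of variables $y=S_T(x)$, for which $\rd y=(T/\pi)\rd x$ and which carries $[0,2\pi]$ onto $[-T,T]$. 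This yields the exact identity
\[
\|h\|_{W^{\alpha,q}([0,2\pi])}^q=\sum_{\tau=0}^{\alpha}\Big(\frac{T}{\pi}\Big)^{\tau q-1}\int_{-T}^{T}\big|g^{(\tau)}(y)\big|^q\rd y,
\]
so that each derivative order contributes a scaling power $(T/\pi)^{\tau q-1}$.

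Next I would control these powers uniformly in $\tau$. Writing $(T/\pi)^{\tau q-1}=T^{\tau q-1}\,\pi^{1-\tau q}$ and using the hypothesis $T\ge 1$, the factor $T^{\tau q-1}$ is nondecreasing in $\tau$ and hence bounded by $T^{\alpha q-1}$, while $\pi^{1-\tau q}\le\pi$ for every $\tau\ge 0$ (as $q>1$ forces $1-\tau q\le 1$). Pulling the common factor $\pi\,T^{\alpha q-1}$ out of the sum leaves precisely
\[
\sum_{\tau=0}^{\alpha}\int_{-T}^{T}\big|g^{(\tau)}(y)\big|^q\rd y=\|g\|_{W^{\alpha,q}([-T,T])}^q.
\]

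Finally I would apply Lemma~\ref{lem:g-norm-bound} on the interval $[a,b]=[-T,T]$ with the choice $\varepsilon=0$. Here the minimal absolute value is $c=\min_{x\in[-T,T]}\lvert x\rvert=0$, so $\rho^{\varepsilon/p}(c)=1$ and the resulting bound is independent of $T$, namely $\|g\|_{W^{\alpha,q}([-T,T])}\le C_{p,q,\alpha}\|f\|_{W^{\alpha,q}_{\rho}}$. Combining the three displays and taking $q$-th roots produces
\[
\|h\|_{W^{\alpha,q}([0,2\pi])}\le \pi^{1/q}\,C_{p,q,\alpha}\,T^{\alpha-1/q}\,\|f\|_{W^{\alpha,q}_{\rho}},
\]
which is the claim after absorbing the harmless constant $\pi^{1/q}$ into $C_{p,q,\alpha}$.

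The chain rule and the substitution are routine; the only point requiring genuine care is the bookkeeping of the scaling exponents. The main thing to verify is that, for $T\ge 1$, the top-order term $\tau=\alpha$ dominates and yields exactly the exponent $\alpha q-1$ (equivalently $\alpha-1/q$ after the $q$-th root), and that invoking the previous lemma with the borderline value $\varepsilon=0$ is precisely what strips away any residual $T$-dependence from $\|g\|_{W^{\alpha,q}([-T,T])}$, leaving the clean power $T^{\alpha-1/q}$.
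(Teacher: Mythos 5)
Your proof is correct and follows essentially the same route as the paper's: chain rule for the affine map $S_T$, change of variables to pull out the powers $(T/\pi)^{\tau q - 1}$, bounding these by the top-order term for $T \ge 1$, and then invoking Lemma~\ref{lem:g-norm-bound} on $[-T,T]$ with $\varepsilon = 0$ to remove the residual $T$-dependence. The only difference is that you make the bookkeeping of the $\pi$ factors explicit, which the paper absorbs silently into $C_{q,\alpha}$.
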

\begin{proof}
We have
\begin{align*}
 \|h\|_{W^{\alpha,q} ([0,2\pi])}
 &=
 \left(\sum_{\tau=0}^{\alpha} \int_0^{2\pi} \lvert h^{(\tau)} (x) \rvert^q \rd x\right)^{1/q}
 \\&=
 \left(\sum_{\tau=0}^{\alpha} \int_0^{2\pi} \lvert (g \circ S_T )^{(\tau)} (x) \rvert^q \rd x \right)^{1/q}
 \\
 &=
 \left( \sum_{\tau=0}^{\alpha} \int_0^{2\pi} \lvert (T/\pi)^{\tau} (g^{(\tau)} \circ S_T ) (x) \rvert^q \rd x\right)^{1/q}
 \\
 &= \left( \sum_{\tau=0}^{\alpha} (T/\pi)^{\tau q - 1} \int_{-T}^T \lvert  g^{(\tau)} (x) \rvert^q \rd x \right)^{1/q}
 \\
 &\le
 C_{q,\alpha} T^{\alpha - 1/q} \lVert g \rVert_{W^{\alpha,q}([-T, T])}.
\end{align*}
The claim then follows from Lemma~\ref{lem:g-norm-bound} with $a = -T$, $b = T$, and $\varepsilon = 0$.
\qed \end{proof}

\begin{lemma}[$L^p$ interpolation error for periodic functions]\label{lem:interpolation-box}
Let $1\le p <q < \infty$, $\alpha\in\N$, and $f\in W^{\alpha,q}_{\rho}$.
Define $g=f\rho^{1/p}$ and an auxiliary periodic function $G$ as in \eqref{eq:periodize}. 
Then
\[
\|G-\calI_n(G)\|_{L^p ([-T,T])}\le C_{p,q,\alpha} T^{\alpha-1/q+1/p} \|f\|_{W^{\alpha,q}_{\rho}} \frac{1}{n^{\alpha}}.
\]

\end{lemma}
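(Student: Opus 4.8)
The plan is to transport the whole estimate to the reference interval $[0,2\pi]$ via the linear scaling $S_T(y)=Ty/\pi-T$ and then invoke a classical error bound for trigonometric interpolation of periodic functions. Writing $\widetilde{G}\coloneqq G\circ S_T$, I would first observe that $S_T$ maps the equidistant nodes $2\pi j/n$ on $[0,2\pi]$ to the nodes $\xi_j$ and sends $\phi_k^{[-T,T]}$ to a constant multiple of the standard Fourier basis $\e^{\imagunit k y}/\sqrt{2\pi}$ on $[0,2\pi]$. Consequently the interpolation operator intertwines with the scaling, $[\calI_n(G)]\circ S_T=\widetilde{\calI}_n(\widetilde{G})$, where $\widetilde{\calI}_n$ denotes trigonometric interpolation on $[0,2\pi]$: the left-hand side is a trigonometric polynomial of the same degree interpolating $\widetilde{G}$ at $2\pi j/n$, so uniqueness of the interpolant applies. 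A change of variables then yields the exact identity
\[
\|G-\calI_n(G)\|_{L^p([-T,T])}=(T/\pi)^{1/p}\,\|\widetilde{G}-\widetilde{\calI}_n(\widetilde{G})\|_{L^p([0,2\pi])}.
\]

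Next I would invoke the classical $L^p$ error bound for trigonometric interpolation of periodic functions on $[0,2\pi]$, which for a $2\pi$-periodic $H$ with $H^{(\alpha)}\in L^q([0,2\pi])$, $1\le p\le q$ and $q>1$ reads $\|H-\widetilde{\calI}_n(H)\|_{L^p([0,2\pi])}\le C_{p,q,\alpha}\,n^{-\alpha}\|H\|_{W^{\alpha,q}([0,2\pi])}$. Lemma~\ref{lem:periodize} makes $\widetilde{G}$ admissible for this bound: $G$ is $(\alpha-1)$-times continuously differentiable with matching boundary derivatives at the endpoints, hence genuinely $2\pi$-periodic after rescaling, with $\widetilde{G}^{(\alpha)}\in L^q$. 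The assumption $p<q$ enters precisely here: since the domain is bounded, $L^q\hookrightarrow L^p$, so it suffices to establish the estimate in $L^q$ (where the interpolation projector is well behaved for $1<q<\infty$), and the embedding transfers it to $L^p$ without degrading the rate $n^{-\alpha}$. Combined with the scaling identity above, the proof is thereby reduced to bounding $\|\widetilde{G}\|_{W^{\alpha,q}([0,2\pi])}$.

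For the last step, the key point is that the Bernoulli correction in~\eqref{eq:periodize} is scale-invariant in the $[0,2\pi]$ coordinate. Indeed, using $B^{[-T,T]}_\tau(S_T(y))=(2T)^{\tau-1}B_\tau(y/(2\pi))$ together with the change of variables $\int_{-T}^T g^{(\tau)}=(T/\pi)^{1-\tau}\int_0^{2\pi}(\widetilde{g})^{(\tau)}$, every factor of $T$ produced by differentiating the rescaled Bernoulli polynomial cancels against the factor of $T$ carried by the corresponding integral, so each correction term contributes to $\|\widetilde{G}\|_{W^{\alpha,q}([0,2\pi])}$ only a $T$-independent multiple of $\|\widetilde{g}\|_{W^{\alpha,q}([0,2\pi])}$, after bounding $\int_0^{2\pi}(\widetilde{g})^{(\tau)}$ by $\|\widetilde{g}\|_{W^{\alpha,q}([0,2\pi])}$ via Hölder. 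Hence $\|\widetilde{G}\|_{W^{\alpha,q}([0,2\pi])}\le C_{p,q,\alpha}\|\widetilde{g}\|_{W^{\alpha,q}([0,2\pi])}$ with $\widetilde{g}=g\circ S_T$, and Lemma~\ref{lem:scaling-norm} bounds the right-hand side by $C_{p,q,\alpha}\,T^{\alpha-1/q}\|f\|_{W^{\alpha,q}_\rho}$. Multiplying the collected powers of $T$, namely $(T/\pi)^{1/p}$ from the change of variables and $T^{\alpha-1/q}$ from the scaling lemma, gives the claimed factor $T^{\alpha-1/q+1/p}$ and the rate $n^{-\alpha}$. The main obstacle I anticipate is precisely this $T$-bookkeeping: a naive estimate of $\|G\|_{W^{\alpha,q}([-T,T])}$ in the original coordinate picks up spurious powers of $T$ (up to $T^{\alpha}$) from the Bernoulli terms, and these must be seen to cancel once the estimate is carried out on $[0,2\pi]$, which is why the whole argument is routed through the scaling $S_T$ rather than applied directly on $[-T,T]$.
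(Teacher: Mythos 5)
Your proposal is correct and follows essentially the same route as the paper: rescale to $[0,2\pi]$ via $S_T$, apply the classical $L^p$ error bound for trigonometric interpolation of periodic functions (the paper cites Temlyakov's Theorem~2.4.4 together with the norm equivalence of Ullrich), bound the periodic Sobolev norm using Lemma~\ref{lem:scaling-norm}, and collect the powers $(T/\pi)^{1/p}$ and $T^{\alpha-1/q}$. In fact your argument is slightly more careful than the paper's, which silently applies Lemma~\ref{lem:scaling-norm} to $G\circ S_T$ rather than $g\circ S_T$; your observation that the rescaled Bernoulli corrections are $T$-independent, so that $\lVert G\circ S_T\rVert_{W^{\alpha,q}([0,2\pi])}\le C_{p,q,\alpha}\lVert g\circ S_T\rVert_{W^{\alpha,q}([0,2\pi])}$, supplies exactly the missing justification.
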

\begin{proof}
The result by Temlyakov \cite[Theorem~2.4.4]{T2018_book}, where the interpolation operator is denoted by $I_m$, applies to our setting. Therein, the function space considered is defined via convolution kernels, but the norm equivalence to periodic Sobolev spaces 
\[
W^{\alpha,q}(\T[0,2\pi)) \coloneqq \big\{f \; \big| \; f\in W^{\alpha,q}([0,2\pi]), f^{(\tau)}(0)=f^{(\tau)}(2\pi) \;\text{for} \;\tau=0,\ldots,\alpha-1 \big\}
\]
is shown in \cite[Theorem~2.7]{U2007}. Hence, by letting $S_T(x) \coloneqq Tx/\pi-T$, \cite[Theorem~2.4.4]{T2018_book} leads to
\[
\|G\circ S_T-\calI_n(G \circ S_T)\|_{W^{\alpha,q}(\T(0,2\pi))} \le C'_{p,q,\alpha} \|G\circ S_T \|_{W^{\alpha,q}(\T(0,2\pi))}\frac{1}{n^\alpha}.
\]
Further, by Lemma~\ref{lem:scaling-norm}, the above error is bounded by 
\[
\le  C''_{p,q,\alpha} T^{\alpha-1/q} \|f\|_{W^{\alpha,q}_{\rho}}\frac{1}{n^\alpha}.
\]
At the same time, since $S_T$ is a plain linear scaling, we have
\[
\|G\circ S_T-\calI_n(G\circ S_T ) \|_{L^p(\T(0,2\pi))}=
\left(\frac{\pi}{T}\right)^{1/p}\|G -\calI_n(G) \|_{L^p(\T(-T,T))}.
\]
Thus we have proved the claim.
\qed \end{proof}

\begin{lemma}[Error bound on {$[-T,T]$} for $\Aa_n$]\label{lem:error-inside}
    Let $1\le p <q < \infty$, $\alpha\in\N$, and $f\in W^{\alpha,q}_{\rho}$. Choose $\varepsilon\in(0,\frac{q-p}{pq})$ arbitrarily. Then the approximation error on $[-T,T]$ by the algorithm $\Aa_n$ is bounded by
    \begin{align*}
    &\|f-\Aa_n(f)\|_{L_\rho ^p([-T,T])}
=
\left(\int_{-T}^T \left| f(x)\rho^{1/p}(x)-\calI_n(f\rho^{1/p}) \right|^p \rd x\right)^{1/p}
 \\
& \le C_1 \|f\|_{W^{\alpha,q}_{\rho}} T^{\alpha-1/q+1/p} n^{-\alpha}\\&\kern20mm+C_2 \|f\|_{W^{\alpha,q}_{\rho}} (\ln n)\max\{1,(2T)^{\alpha-1+1/p}\} \e^{-\frac{(q-p)/pq-\varepsilon}{2}T^2},
    \end{align*}
    where the constants $C_1$ and $C_2$ do not depend on $n$ or $T$.
\end{lemma}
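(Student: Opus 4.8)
The plan is to estimate the three terms of the decomposition~\eqref{eq:error-dec-in} separately. The middle term $\|G-\calI_n(G)\|_{L^p([-T,T])}$ is controlled directly by Lemma~\ref{lem:interpolation-box}, which already produces the first summand $C_1\|f\|_{W^{\alpha,q}_\rho}T^{\alpha-1/q+1/p}n^{-\alpha}$ of the claim. It therefore remains to handle the two terms involving $g-G$, and by the definition~\eqref{eq:periodize} this difference is the explicit correction
\[
g(x)-G(x)=\sum_{\tau=1}^{\alpha}\frac{B^{[-T,T]}_{\tau}(x)}{\tau!}\left(\int_{-T}^{T}g^{(\tau)}(y)\rd y\right).
\]

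First I would bound the size of this correction in two norms. For the integral factor I would apply the fundamental theorem of calculus, writing $\int_{-T}^{T}g^{(\tau)}(y)\rd y=g^{(\tau-1)}(T)-g^{(\tau-1)}(-T)$ for $\tau=1,\dots,\alpha$, and then invoke Lemma~\ref{lem:decay} (which covers the derivative orders $0,\dots,\alpha-1$) together with the evenness and monotonicity of $\rho$ to get $\lvert\int_{-T}^{T}g^{(\tau)}(y)\rd y\rvert\le 2C_{p,q,\alpha,\varepsilon}\|f\|_{W^{\alpha,q}_\rho}\,\rho^{1/p-1/q-\varepsilon}(T)$. Since $\rho^{1/p-1/q-\varepsilon}(T)$ is a constant multiple of $\e^{-(1/p-1/q-\varepsilon)T^2/2}$ and $1/p-1/q=(q-p)/(pq)$, this is precisely the Gaussian factor appearing in the statement. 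For the Bernoulli factor I would use $B^{[-T,T]}_\tau(x)=(2T)^{\tau-1}B_\tau((x+T)/(2T))$ and the boundedness of $B_\tau$ on $[0,1]$, after a linear change of variables, to obtain $\|B^{[-T,T]}_\tau\|_{L^p([-T,T])}\le C(2T)^{\tau-1+1/p}$ and $\|B^{[-T,T]}_\tau\|_{L^\infty([-T,T])}\le C(2T)^{\tau-1}$. Summing over $\tau$ and retaining the dominant power $\tau=\alpha$ yields
\[
\|g-G\|_{L^p([-T,T])}\le C\,\|f\|_{W^{\alpha,q}_\rho}\max\{1,(2T)^{\alpha-1+1/p}\}\,\e^{-\frac{(q-p)/pq-\varepsilon}{2}T^2},
\]
which disposes of the first term of~\eqref{eq:error-dec-in} and already matches the shape of the second summand (apart from the $\ln n$).

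For the final term $\|\calI_n(G)-\calI_n(g)\|_{L^p([-T,T])}=\|\calI_n(g-G)\|_{L^p([-T,T])}$, I would pass through the supremum norm and the Lebesgue constant $\Lambda_n$ of equidistant trigonometric interpolation. Using $\|\calI_n(\psi)\|_{L^p([-T,T])}\le(2T)^{1/p}\|\calI_n(\psi)\|_{L^\infty([-T,T])}\le(2T)^{1/p}\Lambda_n\|\psi\|_{L^\infty([-T,T])}$ with $\Lambda_n=\calO(\ln n)$, and then inserting the sup-norm bound on $g-G$ from the previous step, produces the same power of $T$ and the same Gaussian decay multiplied by $\ln n$. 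Collecting all three contributions and folding the $\ln n$-free first term into the $\ln n$-carrying one (legitimate since $\ln n$ is bounded below for $n\ge 2$) gives the stated $C_1$- and $C_2$-terms.

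I expect the main obstacle to be the last term: one must justify that the equidistant trigonometric interpolation operator has norm of order $\ln n$ on the sup-to-$L^p$ scale (the classical Lebesgue-constant growth), and then align the $T$-powers and the Gaussian factor consistently with the estimates for the other two terms. The most delicate bookkeeping is verifying that the constants $C_1$ and $C_2$ depend on neither $n$ nor $T$, which requires tracking where the factors $(2T)^{\tau-1+1/p}$, $\rho^{1/p-1/q-\varepsilon}(T)$, and $\Lambda_n$ enter and confirming that the $\varepsilon$-dependence is absorbed entirely into the exponent through Lemma~\ref{lem:decay}.
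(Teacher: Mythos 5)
Your proposal matches the paper's proof essentially step for step: the same three-term decomposition, Lemma~\ref{lem:interpolation-box} for the middle term, the Bernoulli-polynomial bounds combined with the fundamental theorem of calculus and Lemma~\ref{lem:decay} for $\|g-G\|$, and the Lebesgue-constant ($\Lambda_n = \calO(\ln n)$) argument for $\|\calI_n(g-G)\|$, with the $\ln n$-free term absorbed into the $\ln n$-term for $n \ge 2$. The only cosmetic difference is that you route the third term explicitly through the sup norm of $g-G$ rather than the paper's $C([-T,T]) \to L^p([-T,T])$ operator-norm phrasing, which is if anything slightly cleaner; nothing further is needed.
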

\begin{proof}
    From Lemma~\ref{lem:decay}, we know that for $g \coloneqq f\rho^{1/p}$ the quantity
    \[
    \|g\|_{\text{decay}} =
    \sup _{\substack{x \in \mathbb{R} \\ \tau \in\{0, \ldots, \alpha-1\}}}\left|\rho^{1/q-1/p+\varepsilon}(x) g^{(\tau)}(x)\right|
    \]
    is bounded. 
As mentioned above, we consider the following bound:
\begin{align}
     &\|f-\Aa_n(f)\|_{L_\rho ^p([-T,T])} 
     =\|g-\calI_n{g}\|_{L^p ([-T,T])} \nonumber\\
     & \le \|g-G\|_{L^p ([-T,T])} + \|G-\calI_n(G)\|_{L^p ([-T,T])} + \|\calI_n(G)-\calI_n(g)\|_{L^p ([-T,T])}. \label{eq:upper-three-terms}
\end{align}

For the first term in the last line of \eqref{eq:upper-three-terms}, we have
\begin{align}\label{eq:err-term1}
\|g-G\|_{L^p ([-T,T])}
&\le
\sum_{\tau=1}^{\alpha}\frac{\|B^{[-T,T]}_{\tau}(x)\|_{L^p ([-T,T])}}{\tau!}\left|\int_{-T}^T g^{(\tau)}(s) \rd s\right|\nonumber
\\
&\le
\sum_{\tau=1}^{\alpha}\frac{\|B^{[-T,T]}_{\tau}(x)\|_{L^\infty ([-T,T])} \; (2T)^{1/p}}{\tau!}\left|\int_{-T}^T g^{(\tau)}(s) \rd s\right|\nonumber
\\
&\le
\sum_{\tau=1}^{\alpha}(2T)^{\tau-1+1/p}\left|\int_{-T}^T g^{(\tau)}(s) \rd s\right|\nonumber
\\
&\le
\alpha \max\{1,(2T)^{\alpha-1+1/p}\}\|g\|_{\text{decay}}\;   C_{p,q,\varepsilon}\; \e^{-\frac{(q-p)/pq-\varepsilon}{2}T^2},
\end{align}
where in the penultimate line, we used 
$|\frac{B^{[-T,T]}_{\tau}(x)}{\tau!}|\le \frac{(2T)^{\tau-1}}{2}$ for $x\in[-T,T]$; see~\cite[Equation~(6)]{NS2023} or~\cite{L1940}.

We proceed to the third term. We have
\begin{align}\label{eq:err-term2}
&\|\calI_n(g-G)\|_{L^p ([-T,T])} \nonumber \\
&\le
\|\calI_n\|_{C([-T,T])\to L^p ([-T,T])} \|g-G\|_{L^p ([-T,T])}\nonumber
\\
&\le
\|\calI_n\|_{C([-T,T])\to L^p ([-T,T])}\alpha \max\{1,(2T)^{\alpha-1+1/p}\} \;\|g\|_{\text{decay}}\;  C_{p,q,\varepsilon}\; \e^{-\frac{(q-p)/pq-\varepsilon}{2}T^2}.
\end{align}
Now the operator norm of the interpolation, from continuous functions to $L^p$, can be bounded using the Lebesgue constant $\Lambda_n$ for trigonometric interpolation by
\[
\|\calI_n\|_{C([-T,T])\to L^p ([-T,T])} \le 2T \Lambda_n \le 2T \left(\frac{\pi+4}{\pi}+\frac{2\ln n}{\pi}\right),
\] where this explicit constant here can be found, e.g., in \cite[Theorem~2.1]{SN2016}, or we refer to \cite{EZ1966} as an earlier result. 

For the second term in \eqref{eq:upper-three-terms}, we know from Lemma~\ref{lem:interpolation-box} that
\begin{align}\label{eq:err-term3}
 \|G-\calI_n(G)\|_{L^p ([-T,T])} \le  C_{p,q,\alpha} T^{\alpha-1/q+1/p} \|f\|_{W^{\alpha,q}_{\rho}} \frac{1}{n^{\alpha}}
 .
\end{align}
Summing up all three terms, \eqref{eq:err-term1},\eqref{eq:err-term2}, and \eqref{eq:err-term3}, we obtain the error bound
\begin{align*}
       & \|f-\Aa_n(f)\|_{L_\rho ^p([-T,T])}
 \\&\le C_1 T^{\alpha-1/q+1/p} n^{-\alpha}+C_2 (\ln n)\max\{1,(2T)^{\alpha-1+1/p}\} \e^{-\frac{(q-p)/pq-\varepsilon}{2}T^2}.
\end{align*}
Hence we proved the claim.\qed 
\end{proof}

\begin{lemma}[Error bound on tails] \label{lemma:tail-bound}
Let $1\le p <q < \infty$, $\alpha\in\N$, and $f\in W^{\alpha,q}_{\rho}$. Set $g=f\rho^{1/p}$. Choose $\varepsilon\in(0,\frac{q-p}{pq})$ arbitrarily. For the error outside the interval $[-T,T]$, we have
\begin{align*}
\left(\int_{\R\setminus [-T,T]} \left|f(x)\right|^p \rho(x) \rd x\right)^{1/p}
&\le
  C_{p,q,\varepsilon}\;\|g\|_{\textup{decay}}\; T^{-1/p} \; \e^{-\frac{(q-p)/pq-\varepsilon}{2}T^2}.
\end{align*}
\end{lemma}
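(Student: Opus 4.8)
The plan is to observe that the tail integral simplifies dramatically once we recognize that the Gaussian weight is exactly absorbed into $g$. Since $g=f\rho^{1/p}$ and $\rho>0$, we have the pointwise identity $\lvert f(x)\rvert^p\rho(x)=\lvert g(x)\rho^{-1/p}(x)\rvert^p\rho(x)=\lvert g(x)\rvert^p$. Hence the quantity to be bounded is simply $\big(\int_{\R\setminus[-T,T]}\lvert g(x)\rvert^p\rd x\big)^{1/p}$, and the whole problem reduces to controlling the $L^p$-tail of $g$ itself.

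Next I would invoke the $\tau=0$ case of the decay estimate from Lemma~\ref{lem:decay}. By definition of $\|g\|_{\textup{decay}}$ we have $\lvert\rho^{1/q-1/p+\varepsilon}(x)\,g(x)\rvert\le\|g\|_{\textup{decay}}$ for all $x$, and since $1/q-1/p+\varepsilon<0$ this rearranges to the pointwise bound $\lvert g(x)\rvert\le\|g\|_{\textup{decay}}\,\rho^{1/p-1/q-\varepsilon}(x)$. Raising to the $p$-th power gives $\lvert g(x)\rvert^p\le\|g\|_{\textup{decay}}^p\,\rho^{\beta}(x)$, where I set $\beta\coloneqq 1-p/q-p\varepsilon=p(1/p-1/q-\varepsilon)$. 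The constraint $\varepsilon\in(0,\tfrac{q-p}{pq})$ is precisely what guarantees $\beta>0$, so $\rho^\beta$ is a genuine (unnormalized) Gaussian and its tail is integrable.

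It then remains to estimate $\int_{\R\setminus[-T,T]}\rho^\beta(x)\rd x=(2\pi)^{-\beta/2}\int_{\R\setminus[-T,T]}\e^{-\beta x^2/2}\rd x$. Using symmetry and the elementary one-sided Gaussian tail bound $\int_T^\infty \e^{-\beta x^2/2}\rd x\le \tfrac{1}{T}\int_T^\infty x\,\e^{-\beta x^2/2}\rd x=\tfrac{1}{\beta T}\e^{-\beta T^2/2}$ (valid for $x\ge T$ since $x/T\ge 1$), I obtain $\int_{\R\setminus[-T,T]}\rho^\beta(x)\rd x\le (2\pi)^{-\beta/2}\tfrac{2}{\beta T}\e^{-\beta T^2/2}$. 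Combining the pieces and taking the $p$-th root converts the prefactor into $(2\pi)^{-\beta/(2p)}(2/(\beta T))^{1/p}=C_{p,q,\varepsilon}\,T^{-1/p}$ and turns the exponent into $\beta T^2/(2p)=\tfrac{(q-p)/(pq)-\varepsilon}{2}\,T^2$, which is exactly the claimed rate.

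Because every step is an inequality chain of standard type, I do not anticipate a genuine obstacle here; the only point requiring a little care is bookkeeping of the exponent, namely verifying that $\beta/(2p)=(1/p-1/q-\varepsilon)/2=\tfrac{(q-p)/pq-\varepsilon}{2}$ so that the decay constant in the statement matches the one appearing in Lemma~\ref{lem:error-inside} (this alignment is what later lets the two tail and interior contributions be balanced against each other when choosing $T$). The factor $T^{-1/p}$, which could be discarded if one only wanted an upper bound, is retained here since it follows for free from the $1/(\beta T)$ in the Gaussian tail estimate.
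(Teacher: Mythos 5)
Your proposal is correct and follows essentially the same route as the paper's own proof: rewrite the weighted tail integral of $f$ as the unweighted $L^p$-tail of $g$, apply the $\tau=0$ case of Lemma~\ref{lem:decay} pointwise (your $\beta$ equals the paper's $rp$ with $r=1/p-1/q-\varepsilon$), and finish with the elementary Gaussian tail bound $\int_T^\infty \e^{-cx^2/2}\rd x\le \frac{1}{cT}\e^{-cT^2/2}$, which is exactly where the paper's $T^{-1/p}$ factor also comes from. The exponent bookkeeping you flag checks out, so there is nothing to add.
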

\begin{proof}
Let $r \coloneqq 1/p-1/q-\varepsilon > 0$. Using Lemma~\ref{lem:decay} we have
\begin{align*}
&\left(\int_{\R\setminus [-T,T]} \left|g(x)\right|^p \rd x\right)^{1/p}
\le
\left(\int_{\R\setminus [-T,T]} \left(\|g\|_{\text{decay}} \frac{\e^{-rx^2/2}}{\sqrt{2\pi}^r} \right)^p \rd x\right)^{1/p} 
\\
&\le
\|g\|_{\text{decay}} \left(\frac{2}{T}\int_{T}^\infty  \frac{x\e^{-rpx^2/2}}{\sqrt{2\pi}^{rp}} \rd x\right)^{1/p} 
\\
&=
\|g\|_{\text{decay}}   \left(\frac{2}{rpT\sqrt{2\pi}^{rp}} \; \e^{-rpT^2/2}\right)^{1/p} 
=
\frac{2^{1/p}}{(rpT)^{1/p}\sqrt{2\pi}^{r}} \; \e^{-rT^2/2}.
\end{align*}
This proves the claim.\qed \end{proof}

\begin{theorem}[Error bound on {$\R$} for $\Aa_n$] \label{thm:err-total}
    Let $1\le p <q < \infty$, $\alpha\in\N$, and $f\in W^{\alpha,q}_{\rho}$. Choose $\varepsilon\in(0,\frac{q-p}{pq})$ arbitrarily. 
    Set the cut-off interval $[-T,T]$ with
    \begin{align} \label{eq:T-selection}
    T=\sqrt{2\alpha \left(\frac{ q-p}{pq} - \varepsilon \right)^{-1} \ln n}.
    \end{align}
    Then, for any integer $n\ge 2$, the approximation error on $\R$ by the algorithm $\Aa_n$ is bounded by
    \begin{align*}
    \|f-\Aa_n(f)\|_{L_\rho ^p}
&=
\left(\int_\R \left| g(x)\rho^{1/p}(x)-\calI_n(g\rho^{1/p}) \right|^p \rd x\right)^{1/p}
\\&\le
C_{p,q,\alpha,\varepsilon} \|f\|_{W^{\alpha,q}_{\rho}}\frac{(\ln n)^{\frac{\alpha}{2}+\frac{1}{2}+\frac{1}{2p}}}{n^\alpha}.
    \end{align*}
\end{theorem}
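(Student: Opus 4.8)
The plan is to assemble the global bound from the two-part error decomposition in~\eqref{eq:error-decomp}, feeding in the tail estimate of Lemma~\ref{lemma:tail-bound} and the interior estimate of Lemma~\ref{lem:error-inside}, and then substitute the prescribed value of $T$. Writing $\beta \coloneqq \frac{q-p}{pq}-\varepsilon > 0$, the three surviving error contributions (one from the tail, two from the interior) all carry the common exponential factor $\e^{-\frac{\beta}{2}T^2}$ multiplied by algebraic powers of $T$, and the choice of $T$ is engineered precisely so that this exponential equals $n^{-\alpha}$.

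First I would invoke Lemma~\ref{lem:decay} to bound $\|g\|_{\textup{decay}} \le C_{p,q,\alpha,\varepsilon}\|f\|_{W^{\alpha,q}_\rho}$, so that the constants produced by Lemmas~\ref{lem:error-inside} and~\ref{lemma:tail-bound} are uniformly expressed through $\|f\|_{W^{\alpha,q}_\rho}$. Summing the tail term with the two interior terms then gives
\[
\|f-\Aa_n(f)\|_{L_\rho^p} \le C\|f\|_{W^{\alpha,q}_\rho}\Big[ T^{\alpha-1/q+1/p}\, n^{-\alpha} + \big( T^{-1/p} + (\ln n)\max\{1,(2T)^{\alpha-1+1/p}\}\big)\,\e^{-\frac{\beta}{2}T^2}\Big].
\]
The key step is the substitution $T=\sqrt{2\alpha\beta^{-1}\ln n}$ from~\eqref{eq:T-selection}, which gives
\[
T^2 = \frac{2\alpha}{\beta}\ln n \quad\Longrightarrow\quad \e^{-\frac{\beta}{2}T^2} = \e^{-\alpha \ln n} = n^{-\alpha},
\]
and turns every power of $T$ into a power of $\sqrt{\ln n}$ via $T^a = (2\alpha\beta^{-1})^{a/2}(\ln n)^{a/2}$. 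I would then read off the exponent of $\ln n$ accompanying $n^{-\alpha}$ in each term. The tail term contributes exponent $-\frac{1}{2p}$; the first interior term contributes $\frac{1}{2}\big(\alpha-\frac{1}{q}+\frac{1}{p}\big)$; and (once $2T\ge 1$) the second interior term contributes $1+\frac{1}{2}\big(\alpha-1+\frac{1}{p}\big)=\frac{\alpha}{2}+\frac{1}{2}+\frac{1}{2p}$. A one-line comparison shows this last exponent is the largest: it exceeds the first interior exponent by $\frac{1}{2}+\frac{1}{2q}>0$ and dominates the negative tail exponent, so it governs the bound and reproduces exactly the claimed power $(\ln n)^{\frac{\alpha}{2}+\frac{1}{2}+\frac{1}{2p}}$.

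The only genuine subtlety is bookkeeping at small $n$: Lemma~\ref{lem:error-inside} relies (through Lemma~\ref{lem:scaling-norm}) on $T\ge 1$, and the $\max\{1,\cdot\}$ must be resolved. For the finitely many integers $n\ge 2$ with $T<1$, the left-hand side is bounded by $C\|f\|_{W^{\alpha,q}_\rho}$ while $(\ln n)^{\frac{\alpha}{2}+\frac{1}{2}+\frac{1}{2p}}n^{-\alpha}$ stays bounded away from $0$, so enlarging $C_{p,q,\alpha,\varepsilon}$ absorbs these cases; for all remaining $n$ the maximum equals $(2T)^{\alpha-1+1/p}$ and the computation above applies verbatim. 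I expect this edge-case handling, rather than any single estimate, to be the main (albeit routine) obstacle.
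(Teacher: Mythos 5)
Your proposal is correct and follows essentially the same route as the paper: the decomposition \eqref{eq:error-decomp}, Lemmas \ref{lem:decay}, \ref{lem:error-inside} and \ref{lemma:tail-bound}, substitution of the choice \eqref{eq:T-selection} so that $\e^{-\frac{(q-p)/pq-\varepsilon}{2}T^2} = n^{-\alpha}$, and comparison of the resulting logarithmic exponents, of which $\frac{\alpha}{2}+\frac{1}{2}+\frac{1}{2p}$ dominates. The only difference is that your edge-case handling for $T<1$ is vacuous: since $0 < \frac{q-p}{pq}-\varepsilon < \frac{1}{p}-\frac{1}{q} < 1$, the choice \eqref{eq:T-selection} gives $T^2 \geq 2\ln 2\left(\frac{q-p}{pq}-\varepsilon\right)^{-1} > 1$ for every integer $n \geq 2$, which is precisely the observation the paper uses to dispense with this issue in one line.
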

\begin{proof}
We consider the error bound \eqref{eq:error-decomp}. With our choice of the cutoff interval, Lemma~\ref{lem:error-inside} tells us that
\begin{align*}
&\left(\int_{-T}^{T} \left|f(x)-[\Aa_n(f)](x)\right|^p \rho(x) \rd x  \right)^{1/p}
\\&\kern45mm\le
C_1 (\ln n)^{\frac{\alpha}{2} -\frac{1}{2q}+\frac{1}{2p}} n^{-\alpha}+C_2 (\ln n)^{\frac{\alpha}{2}+\frac{1}{2}+\frac{1}{2p}} n^{-\alpha},
\end{align*}
here we used the fact that $T$ is always bigger than $1$ for $n\ge 2$, since $0<\frac{q-p}{pq}-\varepsilon<1$.
For the error outside the interval, using Lemma~\ref{lemma:tail-bound}, we obtain
\begin{align*}
&\left(\int_{\R\setminus [-T,T]} \left|f(x)\right|^p \rho(x) \rd x\right)^{1/p}
\\& \kern30mm\le
  C_{p,q,\varepsilon}\;\|g\|_{\text{decay}}\; \e^{-\frac{(q-p)/pq-\varepsilon}{2}T^2}
=
C_{p,q,\varepsilon}\;\|g\|_{\text{decay}}\; n^{-\alpha}.
\end{align*}
Therefore, the total error is bounded by
\ifpreprint
\begin{equation*}
\|f-\Aa_n(f)\|_{L_\rho ^p} \le C_{p,q,\alpha,\varepsilon} \|f\|_{W^{\alpha,q}_{\rho}} (\ln n)^{\frac{\alpha}{2}+\frac{1}{2}+\frac{1}{2p}} n^{-\alpha}.\qedhere
\end{equation*}
\else
\begin{equation*}
\|f-\Aa_n(f)\|_{L_\rho ^p} \le C_{p,q,\alpha,\varepsilon} \|f\|_{W^{\alpha,q}_{\rho}} (\ln n)^{\frac{\alpha}{2}+\frac{1}{2}+\frac{1}{2p}} n^{-\alpha}.
\end{equation*}
\fi
Thus we have proved the claim.\qed \end{proof}
\begin{remark}[Periodic auxiliary function $G$]
    This proof strategy of introducing the auxiliary function $G$ can be applied to other algorithms or problems. Let us elaborate this part here. Consider an abstract problem of approximating a linear operator or a linear functional $S:\calH_1\to\calH_2$ by a linear algorithm $A_n:\calH_1\to\calH_2$. This is a typical setting in \emph{information-based complexity}, see, e.g., \cite[Section~4.4]{EW2008_book1}. Then, the error has the following upper bound for any $g\in\calH_1$:
    \begin{align}\label{eq:abst_err}
    \|S(g)-A_n(g)\|_{\calH_2}
    &\le
    \|S(g)-S(G)\|_{\calH_2} + \|S(G)-A_n(G)\|_{\calH_2} + \|A_n(G)-A_n(g)\|_{\calH_2}\nonumber
    \\
    &=
    \|S(g-G)\|_{\calH_2} + \|S(G)-A_n(G)\|_{\calH_2} + \|A_n(G-g)\|_{\calH_2}.
    \end{align}
    This strategy is first used in \cite{NS2023} for a multivariate integration problem in which $S(g)=\int_{\R^d} g(\bsx) \rd \bsx$ and $A_n$ is a specific quadrature rule, called a scaled rank-$1$ lattice rule. Therein, the first error term in \eqref{eq:abst_err}, $  \|S(g-G)\|_{\calH_2}$, happens to vanish due to properties of the integration functional. However, as we have seen, this term can be bounded just like the third term $\|A_n(G-g)\|_{\calH_2}$ for general linear algorithms.
\end{remark}

In Theorem~\ref{thm:err-total}, the choice of the cut-off parameter $T$ depends on the smoothness $\alpha$ of the approximand. However, this information may not be available in practice. Following \cite[Corollary~4.4]{KSG2022} and \cite[Remark 3.11]{GKS2023}, one can replace $\alpha$ by a slowly increasing function $\gamma(n)$ such as $\max(\ln(\ln n),0)$, and still achieve the convergence rate $n^{-\alpha}$, up to a factor $(\gamma(n) \ln n)^{\frac{\alpha}{2}+\frac{1}{2}+\frac{1}{2p}}$.

\begin{corollary}[$\alpha$-free construction] \label{cor:alpha-free}
    Let assumptions of Theorem~\ref{thm:err-total} be satisfied. 
     Choose $\varepsilon\in(0,\frac{q-p}{pq})$ arbitrarily. Choose a non-decreasing function $\gamma(n): \N \to\R$ satisfying $\lim_{n\to\infty}\gamma(n)=\infty$, 
    and set the cut-off interval $[-T,T]$ with
    \begin{align*}
    T=\sqrt{2\gamma(n) \left(\frac{ q-p}{pq} - \varepsilon \right)^{-1} \ln n}.
    \end{align*}
    Then, for any integer $n\geq\gamma^{-1}(\alpha) \coloneqq \min\{m\in\N\mid\gamma(m)\geq\alpha\}$, the approximation error on $\R$ by the algorithm $\Aa_n$ is bounded by
    \begin{align*}
    \|f-\Aa_n(f)\|_{L_\rho ^p}
&=
\left(\int_\R \left| g(x)\rho^{1/p}(x)-\calI_n(g\rho^{1/p}) \right|^p \rd x\right)^{1/p}
\\&\kern30mm\le
C_{p,q,\alpha,\varepsilon} \|f\|_{W^{\alpha,q}_{\rho}}\frac{(\gamma(n)\ln n)^{\frac{\alpha}{2}+\frac{1}{2}+\frac{1}{2p}}}{n^\alpha}.
    \end{align*}
\end{corollary}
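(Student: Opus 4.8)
The plan is to transcribe the proof of Theorem~\ref{thm:err-total} almost verbatim, making the single substitution $\alpha \mapsto \gamma(n)$ in the choice of the cut-off $T$, and then to use monotonicity of $\gamma$ to recover the rate $n^{-\alpha}$ without knowing $\alpha$. First I would split the error through the decomposition~\eqref{eq:error-decomp} into the contribution on $[-T,T]$ and the tail on $\R\setminus[-T,T]$, and apply Lemma~\ref{lem:error-inside} and Lemma~\ref{lemma:tail-bound} exactly as in Theorem~\ref{thm:err-total}. These lemmas hold for any $T\ge1$ and do not presuppose a relationship between $T$ and the smoothness $\alpha$, so they remain available here; I would only need to check $T\ge1$, which holds whenever $n\ge2$ and $\gamma(n)\ge1$, both guaranteed by $n\ge\gamma^{-1}(\alpha)$.

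The key computation is that with the new choice of $T$ the exponential factor common to both lemmas collapses to a power of $n$. Indeed,
\[
\frac{(q-p)/pq-\varepsilon}{2}\,T^2 = \gamma(n)\ln n, \qquad\text{hence}\qquad \e^{-\frac{(q-p)/pq-\varepsilon}{2}T^2} = n^{-\gamma(n)}.
\]
This is precisely where the hypothesis $n\ge\gamma^{-1}(\alpha)$ is used: since $\gamma$ is non-decreasing, $n\ge\gamma^{-1}(\alpha)$ forces $\gamma(n)\ge\alpha$, and therefore $n^{-\gamma(n)}\le n^{-\alpha}$. Consequently every exponentially decaying term in Lemmas~\ref{lem:error-inside} and~\ref{lemma:tail-bound} is dominated by a constant multiple of $n^{-\alpha}$, just as in Theorem~\ref{thm:err-total}. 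This monotonicity step is the only genuinely new ingredient and is the crux of the $\alpha$-free construction.

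It then remains to convert the powers of $T$ into powers of the logarithmic factor. Writing $T = c_{p,q,\varepsilon}(\gamma(n)\ln n)^{1/2}$ with $c_{p,q,\varepsilon}=\sqrt{2\left((q-p)/pq-\varepsilon\right)^{-1}}$, the term $T^{\alpha-1/q+1/p}$ from Lemma~\ref{lem:error-inside} contributes a factor $(\gamma(n)\ln n)^{\alpha/2-1/(2q)+1/(2p)}$, while the term $(\ln n)\max\{1,(2T)^{\alpha-1+1/p}\}$, bounded using $\ln n\le\gamma(n)\ln n$ (valid since $\gamma(n)\ge\alpha\ge1$), contributes at most $(\gamma(n)\ln n)^{\alpha/2+1/2+1/(2p)}$. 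The tail bound carries only the harmless factor $T^{-1/p}\le1$ and hence exponent zero. Collecting the three contributions, the dominant logarithmic power is $(\gamma(n)\ln n)^{\alpha/2+1/2+1/(2p)}$, which yields the stated estimate.

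I do not anticipate a substantive obstacle: the argument is a direct adaptation of the proof of Theorem~\ref{thm:err-total}, and the only conceptual point is the replacement of the factor $n^{-\alpha}$ coming from the exact choice $T\propto\sqrt{\alpha\ln n}$ by the factor $n^{-\gamma(n)}\le n^{-\alpha}$ coming from $T\propto\sqrt{\gamma(n)\ln n}$. The mild care required is the bookkeeping of the exponents on $(\gamma(n)\ln n)$ and the verification that $T\ge1$ on the admissible range of $n$, so that the invoked lemmas apply without modification.
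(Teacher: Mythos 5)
Your proposal is correct and is precisely the argument the paper intends: the paper states this corollary without a written proof, as a direct rerun of the proof of Theorem~\ref{thm:err-total} with $\alpha$ replaced by $\gamma(n)$ in the choice of $T$, using $\gamma(n)\ge\alpha$ for $n\ge\gamma^{-1}(\alpha)$ to turn $\e^{-\frac{(q-p)/pq-\varepsilon}{2}T^2}=n^{-\gamma(n)}$ into $n^{-\alpha}$, and $\ln n\le\gamma(n)\ln n$ for the exponent bookkeeping — exactly your steps. (One small quibble: $n\ge\gamma^{-1}(\alpha)$ does not by itself guarantee $n\ge2$ — if $\gamma(1)\ge\alpha$ then $n=1$ is admitted, where $T=0$ and the bound degenerates — but this is an artifact of the corollary's statement rather than a gap in your argument.)
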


\begin{remark}[Computational cost]
As mentioned in Introduction, the computational cost and memory usage of an FFT implementation of algorithm $\Aa_n$ are only $\calO(n\log_2 n)$ and $\calO(n)$, respectively.
The advantage of the method is mainly this cheap cost of construction.  However, when the evaluation of the approximand function $f$ is much more expensive, one may wish to have the optimal convergence rate including the logarithmic factor, at the sacrifice of cheap construction. In such a case, we propose the optimal algorithm described in Section~\ref{sec:optimal}.
\end{remark}

\begin{remark}[Higher dimensions]
For multidimensional settings, where the tensor product of one-dimensional Gaussian Sobolev spaces or the dominating-mixed smoothness is considered, it is possible to extend our algorithm in such a way that the construction cost is still $\calO(n \log_2 n)$. One possibility is using rank-$1$ lattices as interpolation nodes, e.g. \cite{LH2003,MS2012,SSN2019,SN2020}. However, the convergence rate of approximation algorithms based on rank-$1$ lattice points deteriorates at least to $n^{-\alpha/2}$ for periodic functions \cite{BKUV2017}, and thus unfortunately it is not possible to obtain the same rate $n^{-\alpha}$. Another possibility is the fast Smolyak construction \cite{GH2014} combined with our algorithm $\Aa_n$, which may attain the optimal rate up to a logarithmic factor.
\end{remark}

\section{Spline algorithms} \label{sec:optimal}

In this section we consider a more refined decomposition of the real line.
By partitioning the line into intervals of unit length and having the number of points placed within each interval decrease exponentially fast as one moves away from the origin, we construct an algorithm that attains the optimal rate $n^{-\alpha}$ in $W_\rho^{\alpha,q}$ when $q = 2$. 
Naive version of this construction based on solving a linear system [see~\eqref{eq:spline-smoother}] requires $\calO(n^3)$ computational cost, and is similar to (and inspired by) those used in~\cite{DK2023} and~\cite[Section~3]{Karvonen2021}. 
Faster computation is possible in some special cases that we do not discuss here.

\subsection{Generic results}

Let $W^{\alpha, q}(\I)$ stand for the standard unweighted Sobolev space on an interval $\I$.
That is, each element of this space is $\alpha$ times weakly differentiable on $\I$ and the weak derivatives are in $L^q(\I)$.
For each $\nu \in \mathbb{N}$, let $A_\nu^{\I,1} \colon W^{\alpha, q}(\I_1) \to L^p(\I_1)$ be an approximation to $f \in W^{\alpha, q}(\I_1)$ based on $\nu$ function values on the interval $\I_1 = [0, 1]$.
We may interpret $A_\nu^{\I,1}(f)$ as an element of $L^p(\mathbb{R})$ that vanishes on $\mathbb{R} \setminus \I_1$.
We assume that there are positive constants $C_0$ and $\beta$ independent of $f$ or $\nu$ such that
\begin{equation} \label{eq:An0-generic}
        \| f - A_\nu^{\I,1}(f) \|_{L^p(\I_1)} \leq C_0 \nu^{-\beta}  \| f \|_{W^{\alpha, q}(\I_1)}
\end{equation}
for all $f \in W^{\alpha, q}(\I_1)$ and $\nu \in \mathbb{N}$. 
Let $\I_k = [k-1, k]$ for $k\in\Z$. 
By straightforward translation we obtain algorithms $A_\nu^{\I,k} \colon W^{\alpha, q}(\I_k) \to L^p(\I_k)$ such that
\begin{equation} \label{eq:Ank-generic}
        \| f - A_\nu^{\I,k}(f) \|_{L^p(\I_k)} \leq C_0 \nu^{-\beta}  \| f \|_{W^{\alpha, q}(\I_k)}
\end{equation}
for all $f \in W^{\alpha, q}(\I_k)$ and $\nu \in \mathbb{N}$.
For $m \in \N$ and $\nu_1, \ldots, \nu_m \in \N$ we set $n = 2(\nu_1 + \cdots + \nu_m)$.
Moreover, set $\nu_{-k} = \nu_k$.
Define the algorithm $\Ab_{n} \colon W^{\alpha, q}_\rho \to L^p_\rho$ that uses $n$ function values to approximate $f \in W^{\alpha, q}_\rho$ via
\begin{equation} \label{eq:An-optimal-def}
        \Ab_{n}(f) = \rho^{-1/p} \sum_{0 < \lvert k \lvert \leq m} A_{\nu_k}^{\I,k}( g_k ), \quad \text{ where } g_k = f|_{\I_k} \rho^{1/p}.
\end{equation}
Note that $\Ab_n(f)$ vanishes on $\mathbb{R} \setminus [-m, m]$.
We shall show that $\Ab_n$ inherits the convergence rate~\eqref{eq:An0-generic} of $A_\nu^{\I,1}$ if $m$ and $\nu_1, \ldots, \nu_m$ are selected properly.

\begin{proposition}[Generic error bound] \label{prop:optimal-generic}
Let $1\le p <q < \infty$, $\alpha\in\N$, and $f \in W^{\alpha, q}_\rho$. Then there are positive constants $C$ and $\delta$, which do not depend on $m$ or $\nu_1, \ldots, \nu_m$, such that
\begin{equation*}
        \| f - \Ab_{n}(f) \|_{L^p_\rho} \leq C \Bigg( \sum_{k=1}^m \e^{-\delta k^2} \nu_k^{-\beta} + \e^{-\delta m^2} \Bigg) .
\end{equation*}
\end{proposition}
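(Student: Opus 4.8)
The plan is to exploit the fact that $\Ab_n$ acts independently on each unit interval, so that the global weighted error splits into a sum of purely local unweighted errors plus a tail term. Since $\Ab_n(f)$ vanishes on $\R\setminus[-m,m]$ and the intervals $\I_k$, $0<|k|\le m$, partition $[-m,m]$, I would write $\|f-\Ab_n(f)\|_{L^p_\rho}^p$ as a sum of integrals over the $\I_k$ plus $\int_{\R\setminus[-m,m]}|f|^p\rho$, and then pass back to the $L^p_\rho$ norm using $\|(a_k)\|_{\ell^1}\ge\|(a_k)\|_{\ell^p}$ (valid since $p\ge1$) to obtain
\[
\|f-\Ab_n(f)\|_{L^p_\rho} \le \sum_{0<|k|\le m}\|f-\Ab_n(f)\|_{L^p_\rho(\I_k)} + \|f\|_{L^p_\rho(\R\setminus[-m,m])}.
\]

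On a single interval $\I_k$ only the $k$-th summand of $\Ab_n(f)=\rho^{-1/p}\sum_{0<|k|\le m}A^{\I,k}_{\nu_k}(g_k)$ survives, so the weight cancels exactly: pulling $\rho^{1/p}$ inside gives $\|f-\Ab_n(f)\|_{L^p_\rho(\I_k)}=\|g_k-A^{\I,k}_{\nu_k}(g_k)\|_{L^p(\I_k)}$ with $g_k=f|_{\I_k}\rho^{1/p}$. The assumed local rate~\eqref{eq:Ank-generic} bounds this by $C_0\nu_k^{-\beta}\|g_k\|_{W^{\alpha,q}(\I_k)}$, and Lemma~\ref{lem:g-norm-bound} (applied on $\I_k$, for a fixed $\varepsilon\in(0,(q-p)/(pq))$, so that $c=|k|-1$) supplies the crucial Gaussian decay $\|g_k\|_{W^{\alpha,q}(\I_k)}\le C_{p,q,\alpha,\varepsilon}\,\rho^{\varepsilon/p}(|k|-1)\|f\|_{W^{\alpha,q}_\rho}$. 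Using $\rho^{\varepsilon/p}(|k|-1)\le C'\e^{-\delta k^2}$ — which holds because $(|k|-1)^2-\tfrac12 k^2+1=\tfrac12(|k|-2)^2\ge0$, so the loss from replacing $|k|-1$ by $k$ is absorbed into the constant — and collapsing the symmetric $\pm k$ pairs via $\nu_{-k}=\nu_k$ into a factor $2$ yields the first sum $\sum_{k=1}^m\e^{-\delta k^2}\nu_k^{-\beta}$.

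For the tail I would apply Lemma~\ref{lemma:tail-bound} with $T=m$ together with $\|g\|_{\textup{decay}}\le C_{p,q,\alpha,\varepsilon}\|f\|_{W^{\alpha,q}_\rho}$ from Lemma~\ref{lem:decay}, giving $\|f\|_{L^p_\rho(\R\setminus[-m,m])}\le C\e^{-\delta m^2}\|f\|_{W^{\alpha,q}_\rho}$ after relabelling the decay rate. Taking $\delta$ to be the smaller of the two exponential rates and absorbing $\|f\|_{W^{\alpha,q}_\rho}$ and all remaining ($m$- and $\nu_k$-independent) constants into $C$ then completes the bound.

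The only genuine content is the per-interval decay: it is essential that the local Sobolev norm of $g=f\rho^{1/p}$ restricted to $\I_k$ decays like a Gaussian in $k$, which is exactly what Lemma~\ref{lem:g-norm-bound} delivers; everything else — the interval-wise cancellation of the weight, the $\ell^p\subset\ell^1$ step, and the elementary comparison $(|k|-1)^2\ge\tfrac12 k^2-1$ — is routine bookkeeping. The small point requiring care is that the chosen $\varepsilon$ be admissible in both lemmas, i.e. $\varepsilon\in[0,q-p)$ for Lemma~\ref{lem:g-norm-bound} and $\varepsilon\in(0,(q-p)/(pq))$ for Lemma~\ref{lemma:tail-bound}; since $(q-p)/(pq)<q-p$, any $\varepsilon\in(0,(q-p)/(pq))$ serves both.
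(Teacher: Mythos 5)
Your proposal is correct and follows essentially the same route as the paper's own proof: the same decomposition into per-interval errors plus a tail term, the same weight cancellation $\|f-\Ab_n(f)\|_{L^p_\rho(\I_k)}=\|g_k-A^{\I,k}_{\nu_k}(g_k)\|_{L^p(\I_k)}$, and the same combination of~\eqref{eq:Ank-generic} with Lemma~\ref{lem:g-norm-bound} for the intervals and Lemmas~\ref{lem:decay} and~\ref{lemma:tail-bound} for the tail. Your additional details (the $\ell^p\le\ell^1$ justification, the explicit bound $\rho^{\varepsilon/p}(|k|-1)\le C\e^{-\delta k^2}$, and the check that one $\varepsilon$ is admissible in both lemmas) are correct and merely make explicit what the paper leaves implicit.
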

\begin{proof}
The proof is similar to those in Section~\ref{sec:trig-interpolation}.
The error decomposes as
\begin{equation*}
        \| f - \Ab_n(f) \|_{L^p_\rho} \leq \sum_{0 < \lvert k \lvert \leq m} \| g_k - A_{\nu_k}^{\I,k}(g_k) \|_{L^p(\I_k)} + \left( \int_{ \R \setminus [-m, m]} \lvert f(x) \rvert^p \rho(x) \rd x \right)^{1/p}.
\end{equation*}
Let $\varepsilon \in (0, \frac{q-p}{pq})$.
Then Lemma~\ref{lemma:tail-bound} bounds the tail term as
\begin{equation*}
        \int_{ \R \setminus [-m, m]} \lvert f(x) \rvert^p \rho(x) \rd x = \int_{ \R \setminus [-m, m]} \lvert g(x) \rvert^p \rd x \leq C_{p, q, \varepsilon} \lVert g \rVert_\textup{decay} \, \e^{- \frac{(q-p)/pq - \varepsilon}{2} m^2 },
\end{equation*}
where $g = f \rho^{1/p}$ and $\lVert g \rVert_\textup{decay}$ is finite by Lemma~\ref{lem:decay}.
The error bound~\eqref{eq:Ank-generic} and Lemma~\ref{lem:g-norm-bound} yield
\begin{equation*}
\begin{split}
        \| g_k - A_{\nu_k}^{\I,k}(g_k) \|_{L^p(\I_k)} &\leq C_0 \nu_k^{-\beta} \lVert g_k \rVert_{W^{\alpha,q}(\I_k)} \leq C_0 C_{p,q,\alpha,\varepsilon} \, \rho^{\varepsilon/p}(\lvert k \rvert -1) \nu_k^{-\beta} \|f\|_{W^{\alpha,q}_{\rho}}.
\end{split}
\end{equation*}
The above bound does not depend on the sign of $k$ because we require that $\nu_{-k} = \nu_k$.
Hence there are positive constants $C$ and $\delta$ such that
\begin{equation*}
        \| f - \Ab_{n}(f) \|_{L^p_\rho} \leq C \Bigg( \sum_{k=1}^m \e^{-\delta k^2} \nu_k^{-\beta} + \e^{-\delta m^2} \Bigg),
\end{equation*}
which completes the proof.
\qed \end{proof}

\begin{theorem}[Error bound for exponentially decaying points] \label{Thm:optimal-generic}
Let $1\le p <q < \infty$, $\alpha\in\N$, and $m \in \mathbb{N}$. Set $\nu_k = 2^{m-k}$ for $k=1,\ldots,m$.
Then the algorithm $\Ab_{n}$ defined in~\eqref{eq:An-optimal-def} uses $n = 2(2^m - 1)$ function values and satisfies
\begin{equation*}
        \| f - \Ab_{n}(f) \|_{L^p_\rho} \leq C n^{-\beta}
\end{equation*}
for any $f \in W_\rho^{\alpha, q}$, where the constant $C$ does not depend on $n$.
\end{theorem}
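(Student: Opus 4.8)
The plan is to invoke Proposition~\ref{prop:optimal-generic} with the specific allocation $\nu_k = 2^{m-k}$ and show that both terms in the resulting bound are of order $n^{-\beta}$. First I would apply the proposition to obtain
\[
\|f - \Ab_n(f)\|_{L^p_\rho} \leq C\Bigg(\sum_{k=1}^m \e^{-\delta k^2} \nu_k^{-\beta} + \e^{-\delta m^2}\Bigg)
\]
for positive constants $C$ and $\delta$ that do not depend on $m$. Writing $\nu_k^{-\beta} = 2^{-\beta(m-k)} = 2^{-\beta m} 2^{\beta k}$ and factoring $2^{-\beta m}$ out of the sum gives
\[
\sum_{k=1}^m \e^{-\delta k^2} \nu_k^{-\beta} = 2^{-\beta m} \sum_{k=1}^m \e^{-\delta k^2 + (\beta \ln 2) k}.
\]

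The key point is that the quadratic exponent $-\delta k^2$ dominates the linear one $(\beta \ln 2)k$, so the remaining sum is bounded above by the convergent series $\sum_{k=1}^\infty \e^{-\delta k^2 + (\beta \ln 2)k}$, whose value is a finite constant $C'$ independent of $m$. This is precisely where the exponentially decaying allocation $\nu_k = 2^{m-k}$ pays off: its geometric growth in $k$ is absorbed by the super-exponential Gaussian tail weight appearing through $\e^{-\delta k^2}$, leaving a total that does not grow with $m$. Hence $\sum_{k=1}^m \e^{-\delta k^2} \nu_k^{-\beta} \leq C' 2^{-\beta m}$.

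It then remains to translate the bounds $2^{-\beta m}$ and $\e^{-\delta m^2}$ into powers of $n$. Since $n = 2(2^m - 1)$, we have $2^m \geq n/2$, so $2^{-\beta m} \leq 2^\beta n^{-\beta}$, which disposes of the first term. For the tail term I would use $n \leq 2^{m+1}$, which gives $\e^{-\delta m^2} n^\beta \leq 2^\beta \e^{-\delta m^2 + (\beta \ln 2)m}$; the exponent here is a downward parabola in $m$ and hence bounded above, so $\e^{-\delta m^2} \leq C'' n^{-\beta}$ uniformly in $m$. Combining the two estimates yields $\|f - \Ab_n(f)\|_{L^p_\rho} \leq C n^{-\beta}$. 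I anticipate no real difficulty here; the only step that deserves care is the uniform-in-$m$ convergence of the series, which relies entirely on the quadratic exponent overwhelming the linear one—the same mechanism that also controls the tail term.
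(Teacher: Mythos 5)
Your proof is correct and follows essentially the same route as the paper: invoke Proposition~\ref{prop:optimal-generic}, factor out $2^{-\beta m}$, bound the sum by the convergent series $\sum_{k=1}^\infty \e^{-\delta k^2} 2^{\beta k}$, and convert $2^{-\beta m}$ and $\e^{-\delta m^2}$ into $n^{-\beta}$ via $n = 2(2^m-1)$. The only difference is cosmetic: where the paper simply remarks that $\e^{-\delta m^2}$ decays faster than $2^{-\beta m}$, you spell out the uniform bound $\e^{-\delta m^2} n^{\beta} \leq 2^{\beta}\e^{-\delta m^2 + (\beta \ln 2) m} \leq C''$, which is a welcome precision but not a new idea.
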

\begin{proof}
The algorithm uses $n = 2\sum_{k=1}^m \nu_k = 2\sum_{k=1}^m 2^{m-k} = 2(2^m - 1)$ function values.
Proposition~\ref{prop:optimal-generic} yields
\begin{equation*}
        \| f - \Ab_{n}(f) \|_{L^p_\rho} \leq C \Bigg( \sum_{k=1}^m \e^{-\delta k^2} \nu_k^{-\beta}  + \e^{-\delta m^2} \Bigg) = C \Bigg( 2^{-\beta m} \sum_{k=1}^m \e^{-\delta k^2} 2^{\beta k}  + \e^{-\delta m^2} \Bigg).
\end{equation*}
Since $\sum_{k=1}^\infty \e^{-\delta k^2} 2^{\beta k} < \infty$ and $\e^{-\delta m^2}$ decays faster than $2^{-\beta m}$ as $m \to \infty$, the claim follows from
\ifpreprint
\begin{equation*}
        2^{-\beta m} \leq (2^m - 1)^{-\beta} = (n/2)^{-\beta}.
\end{equation*}
\else
\begin{equation*}
        2^{-\beta m} \leq (2^m - 1)^{-\beta} = (n/2)^{-\beta}. 
\end{equation*}
\RaggedLeft \qed
\fi
\end{proof}

To attain the optimal rate $n^{-\alpha}$ in $W_\rho^{\alpha,q}$ we need to exhibit a linear algorithm $A_\nu^{\I,1}$ that satisfies~\eqref{eq:An0-generic} for $\beta = \alpha$.
It is well known that for any $p, q \in [1, \infty]$ and $\nu \in \mathbb{N}$ one may construct an algorithm $A_\nu^{\I,1}$ that uses $\nu$ function values and has the error bound
\begin{equation} \label{Eq:Sobolev-rate}
        \| f - A_\nu^{\I,1}(f) \|_{L^p(\I_1)} \leq C_0 \nu^{-\alpha + (1/q - 1/p)_+}  \| f \|_{W^{\alpha, q}(\I_1)}
\end{equation}
for all $f \in W^{\alpha, q}(\I_1)$, where $(x)_+ = \max\{0, x\}$. 
See, for example, Section~1.3.11 in~\cite{Novak1988} or~\cite{KriegSonnleitner2023,NovakTriebel2006}.
Because we assume that $1 \leq p < q < \infty$, Theorem~\ref{Thm:optimal-generic} and~\eqref{Eq:Sobolev-rate} yield the following corollary, which gives the known upper bound on approximation error in $W_\rho^{\alpha,q}$~\cite[Theorem~3.3]{DK2023}.

\begin{corollary}[Convergence rate for sampling recovery]
Let $1\le p <q < \infty$ and $\alpha\in\N$. For each $n \in \mathbb{N}$ there is a linear algorithm of the form~\eqref{eq:An-optimal-def} that uses $n$ function values and satisfies
\begin{equation*}
        \| f - \Ab_{n}(f) \|_{L^p_\rho} \leq C n^{-\alpha},
\end{equation*}
for any $f \in W_\rho^{\alpha, q}$, where the constant $C$ does not depend on $n$.
\end{corollary}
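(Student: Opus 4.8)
The plan is to read off the claim by specialising Theorem~\ref{Thm:optimal-generic} to the largest achievable base rate $\beta = \alpha$ and then extending the resulting bound, which a priori only covers cardinalities $n$ of the special form $2(2^m-1)$, to every $n \in \N$ by a padding argument. The first two steps are a pure assembly of results already established; the only point needing genuine (if elementary) care is the padding.

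First I would use the hypothesis $1 \le p < q < \infty$, which gives $1/q - 1/p < 0$ and hence $(1/q - 1/p)_+ = 0$. Consequently the classical construction recorded in~\eqref{Eq:Sobolev-rate} supplies, for every $\nu \in \N$, a linear algorithm $A_\nu^{\I,1}$ on $\I_1$ with
\[
\| f - A_\nu^{\I,1}(f) \|_{L^p(\I_1)} \le C_0\, \nu^{-\alpha}\, \| f \|_{W^{\alpha,q}(\I_1)},
\]
so that the generic assumption~\eqref{eq:An0-generic} holds with $\beta = \alpha$. Plugging this $A_\nu^{\I,1}$ into the exponentially decaying allocation $\nu_k = 2^{m-k}$ of Theorem~\ref{Thm:optimal-generic} then yields, for each $m \in \N$ and the corresponding $n = 2(2^m - 1)$, an algorithm of the form~\eqref{eq:An-optimal-def} obeying $\| f - \Ab_{n}(f) \|_{L^p_\rho} \le C\, n^{-\alpha}$ for all $f \in W^{\alpha,q}_\rho$, with $C$ independent of $n$.

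Finally I would pass from this geometric sequence of admissible cardinalities to arbitrary $n$. Given $n \ge 2$, let $m$ be the largest integer with $n' \coloneqq 2(2^m - 1) \le n$; such an $m \ge 1$ exists, and maximality gives $n < 2(2^{m+1} - 1)$, whence a short computation yields $n' > n/3$. I would then take the $n'$-point algorithm just constructed and distribute the surplus $n - n'$ evaluations onto the central interval(s), which by Proposition~\ref{prop:optimal-generic} can only decrease the error while keeping the algorithm of the form~\eqref{eq:An-optimal-def} and using at most $n$ points (the at most one leftover evaluation forced by parity is simply ignored). Combining $\| f - \Ab_{n}(f) \|_{L^p_\rho} \le C\, (n')^{-\alpha}$ with $(n')^{-\alpha} < 3^{\alpha}\, n^{-\alpha}$ gives the stated bound $C' n^{-\alpha}$, the finitely many values $n < 2$ being absorbed into the constant. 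I expect no serious obstacle here: everything rests on the two cited results, and the padding estimate $n' > n/3$ is the only genuinely new, and entirely routine, ingredient.
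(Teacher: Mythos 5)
Your proposal is correct and follows essentially the same route as the paper: since $p<q$ forces $(1/q-1/p)_+=0$, the bound~\eqref{Eq:Sobolev-rate} gives the generic hypothesis~\eqref{eq:An0-generic} with $\beta=\alpha$, and Theorem~\ref{Thm:optimal-generic} then delivers the rate $n^{-\alpha}$ — which is exactly how the paper derives the corollary. Your explicit padding argument (largest $n'=2(2^m-1)\le n$, with $n'>n/3$ so that $(n')^{-\alpha}\le 3^{\alpha}n^{-\alpha}$) fills in, correctly, the extension to arbitrary cardinalities $n$ that the paper leaves implicit.
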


\subsection{Spline smoothing}

The construction of an algorithm $A_\nu^{\I,1}$ satisfying~\eqref{Eq:Sobolev-rate} is particularly simple when $q = 2$, in which case $W^{\alpha,q}(\I_1)$ is a Hilbert space.

\begin{definition}[Spline smoother]
Let $W(\I_1)$ be any Hilbert space that is norm-equivalent to $W^{\alpha,2}(\I_1)$ and let $0 \leq x_1 < \cdots < x_\nu \leq 1$ be points on $\I_1$.
For any $\lambda > 0$, the minimiser $s_{f, \nu, \lambda}$ of
\begin{equation} \label{eq:minimisation-problem}
        \sum_{i=1}^\nu [ f(x_i) - s(x_i) ]^2 + \lambda \lVert s \rVert_{W(\I_1)}^2
\end{equation}
among $s \in W^{\alpha, 2}(\I_1)$ is unique.
This minimiser is called the \emph{spline smoother} to $f$ at $x_1, \ldots, x_\nu$.
\end{definition}

Because $\alpha \geq 1$, the Sobolev embedding theorem ensures that $W^{\alpha,2}(\I_1)$, and thus also $W(\I_1)$, is continuously embedded in the space of continuous functions on $\I_1$.
It follows that for each $x \in \I_1$ the point evaluation functional $f \mapsto f(x)$ is continuous on $W(\I_1)$.
By the Riesz representation theorem, for each $x \in \I_1$ there is a \emph{representer} $\ell_x \in W(\I_1)$ such that $f(x) = \langle f, \ell_x \rangle_{W(\I_1)}$ for every $f \in W(\I_1)$.
Using these representers we may define the \emph{reproducing kernel} of $W(\I_1)$ as
\begin{equation*}
        K(x, y) = \langle \ell_x, \ell_y \rangle_{W(\I_1)}.
\end{equation*}
This kernel is positive-semidefinite and satisfies $f(x) = \langle f, K(\cdot , x) \rangle_{W(\I_1)}$.

Some reproducing kernels are available in closed form.
The full scale of Sobolev spaces (also those of fractional order) is reproduced by the class of \emph{Matérn kernels} popular in machine learning and kriging~\cite{RasmussenWilliams2006,Stein1999}.
Let $l$ be a positive scaling parameter and $\mathcal{K}_\gamma$ the modified Bessel function of the second kind of order $\gamma$.
The Matérn kernel $K_\gamma$ of order $\gamma > 0$ given by
\begin{equation} \label{eq:matern-kernel}
        K_\gamma(x, y) = \frac{2^{1-\gamma}}{\Gamma(\gamma)} \bigg( \frac{\sqrt{2\gamma} \, \lvert x - y \rvert }{l} \bigg)^\gamma \mathcal{K}_\gamma \bigg( \frac{\sqrt{2\gamma} \, \lvert x - y \rvert }{l} \bigg) \eqqcolon \Phi_\gamma( \lvert x - y \rvert ),
\end{equation}
is a reproducing kernel for a space $W_\gamma(\I_1)$ that is norm-equivalent to $W^{\alpha,2}(\I_1)$ for $\alpha = \gamma +1/2$.
The curious coefficients $\sqrt{2\gamma}$ and $2^{1-\gamma} / \Gamma(\gamma)$ ensure that $K_\gamma$ tends pointwise to the \emph{Gaussian kernel} $\exp(-(x-y)^2/(2l^2))$ as $\gamma \to \infty$.
The norm-equivalence of $W_\gamma(\I_1)$ and $W^{\alpha,2}(\I_1)$ can be verified as follows.
It goes back at least to the work of Kimeldorf and Wahba~\cite{KimeldorfWahba1970} that a positive-semidefinite kernel of the form $K(x, y) = \Phi(\lvert x - y \rvert)$ for $\Phi \colon \mathbb{R} \to \mathbb{R}$ that is continuous and integrable is a reproducing kernel of a Hilbert space whose squared norm is proportional to
\begin{equation*}
        \int_{\mathbb{R}} \lvert \widehat{f}(\omega) \rvert^2 \, \widehat{\Phi}(\omega)^{-1} \rd \omega ,
\end{equation*}
where $\widehat{f}$ and $\widehat{\Phi}$ are the Fourier transforms of $f$ and $\Phi$ (see also Theorem~10.12 in~\cite{Wendland2005}).
Suppose that $\alpha = \gamma + 1/2$ is an integer.
Because the Fourier transform of the function $\Phi_\gamma$ in~\eqref{eq:matern-kernel} is proportional to $(c_1 + c_2 \omega^2)^{-(\gamma + 1/2)}$ for positive $c_1$ and $c_2$~\cite[Theorem~6.13]{Wendland2005}, the space $W_\gamma(\mathbb{R})$ in which $K_\gamma$ is reproducing on $\mathbb{R}$ has the squared norm
\begin{align*}
        \lVert f \rVert_{W_\gamma(\mathbb{R})}^2 \propto \int_{\mathbb{R}} \lvert \widehat{f}(\omega) \rvert^2 (c_1 + c_2 \omega^2)^{\gamma + 1/2} \rd \omega &= \sum_{\tau=0}^{\alpha} \binom{\alpha}{\tau} c_1^{\alpha-\tau} c_2^\tau \int_{\mathbb{R}} \lvert \widehat{f}(\omega) \rvert^2 \omega^{2\tau} \rd \omega \\
        &= \sum_{\tau=0}^{\alpha} \binom{\alpha}{\tau} c_1^{\alpha-\tau} c_2^\tau \int_{\mathbb{R}} |f^{(\tau)}(x)|^2 \rd x,
\end{align*}
where $f^{(\tau)}$ denotes the $\tau$-th order weak derivative and we have used the binomial theorem and Parseval's identity.
This norm is equivalent to the norm of the standard Sobolev space $W^{\alpha,2}(\mathbb{R})$ and by taking a restriction of $W_\gamma(\mathbb{R})$ on $\I_1$ we obtain a space $W_\gamma(\I_1)$ that is norm-equivalent to $W^{\alpha,2}(\I_1)$.

When one has access to the reproducing kernel $K$ of $W(\I_n)$, constructing the spline smoother is straightforward, though not necessarily computationally convenient.
It is a standard result~\cite[Section~1.3]{Wahba1990} that the minimiser of~\eqref{eq:minimisation-problem} takes the form
\begin{equation} \label{eq:spline-smoother}
        s_{f, \nu, \lambda}(x) = \sum_{i=1}^\nu a_i K(x, x_i).
\end{equation}
The coefficients $\boldsymbol{a} = (a_1, \ldots, a_\nu)$ are the solution to the linear system $(\boldsymbol{K}_\nu + \lambda \boldsymbol{I}_\nu) \boldsymbol{a} = \boldsymbol{f}_\nu$ of $\nu$ equations, where $\boldsymbol{K}_\nu = (K(x_i, x_j))_{i,j=1}^\nu$, $\boldsymbol{I}_\nu$ is the $\nu \times \nu$ identity matrix and $\boldsymbol{f}_\nu = (f(x_1), \ldots, f(x_\nu))$.
In learning theory this result is known as the \emph{representer theorem}~\cite{Scholkopf2001}.
Note that this linear system has a unique solution when $\lambda > 0$ because the matrix $\boldsymbol{K}_\nu$ is positive-semidefinite. 
Let $h_\nu = \max_{i=0,\ldots,\nu} \lvert x_{i+1} - x_i \rvert$, where we use the convention $x_0 = 0$ and $x_{\nu+1} = 1$.
By Proposition~3.6 in~\cite{WendlandRieger2005} (see~\cite{Arcangeli2007} for additional results), there is a positive constant $C_0$ independent of $f \in W^{\alpha,2}(\I_1)$ and the points $x_1, \ldots, x_\nu$ such that
\begin{equation} \label{eq:Wendland-Rieger}
        \lVert f - s_{f, \nu, \lambda} \rVert_{L^p(\I_1)} \leq C_0 h_\nu^\alpha \lVert f \rVert_{W^{\alpha,2}(\I_1)} 
\end{equation}
if $1 \leq p < 2$ and $\lambda \leq \nu^{-2\alpha}$.
This estimate provides an optimal rate of convergence for the following spline smoother based algorithm.
Namely, suppose that $0 = x_{\nu,0} < x_{\nu,1} < \cdots < x_{\nu,\nu} \leq x_{\nu,\nu+1} = 1$ for each $\nu \in \mathbb{N}$ and
\begin{equation} \label{eq:quasi-uniformity}
        \sup_{\nu \in \mathbb{N}} \, \max_{i=0,\ldots,\nu} \, \nu \, \lvert x_{\nu,i+1} - x_{\nu,i} \rvert < \infty.
\end{equation}
For example, the equispaced points
\begin{equation*}
    x_{\nu,i} = \frac{i}{\nu+1}
\end{equation*}
satisfy~\eqref{eq:quasi-uniformity} since $\nu \, \lvert x_{\nu,i+1} - x_{\nu,i} \rvert = \nu/(\nu+1) < 1$.
Set $\lambda_\nu = \nu^{-2\alpha}$ and let $s_{f,\nu,\lambda_\nu}$ be the spline smoother to $f$ at the points $x_{\nu,1}, \ldots, x_{\nu,\nu}$.
When
\begin{equation} \label{eq:A1n-spline}
        A_\nu^{\I,1}(f) = s_{f,\nu,\lambda_\nu},
\end{equation}
the algorithm $\Ab_{n}$ in~\eqref{eq:An-optimal-def} is a sum of $2m$ spline smoothers and uses $n$ function values.

\begin{corollary}[Convergence of a spline smoother algorithm] \label{cor:spline-convergence}
Let $1 \leq p < 2 \leq q$ and $\alpha\in\N$.
Let $m$ and $\nu_1, \ldots, \nu_m$ be as in Theorem~\ref{Thm:optimal-generic} and $A_n^{\I,1}$ as in~\eqref{eq:A1n-spline}.
Then the algorithm $\Ab_n$ defined in~\eqref{eq:An-optimal-def} uses $n = 2(2^m - 1)$ function values and satisfies
\begin{equation*}
        \| f - \Ab_n(f) \|_{L^p_\rho} \leq C n^{-\alpha}
\end{equation*}
for any $f \in W_\rho^{\alpha, q}$, where the constant $C$ does not depend on $n$.
\end{corollary}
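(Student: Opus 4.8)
The plan is to verify that the spline-smoother map $A_\nu^{\I,1}$ defined in \eqref{eq:A1n-spline} is an admissible building block, namely that it satisfies the generic hypothesis \eqref{eq:An0-generic} with exponent $\beta = \alpha$, and then to read off the conclusion directly from Theorem~\ref{Thm:optimal-generic} applied with $\beta = \alpha$. Since every other ingredient feeding Theorem~\ref{Thm:optimal-generic} has already been assembled, the work reduces to checking this single per-interval bound.

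First I would invoke the Wendland–Rieger estimate \eqref{eq:Wendland-Rieger}. With the choice $\lambda_\nu = \nu^{-2\alpha}$ the hypothesis $\lambda \leq \nu^{-2\alpha}$ is met, so for $1 \leq p < 2$ one obtains $\lVert f - A_\nu^{\I,1}(f) \rVert_{L^p(\I_1)} \leq C_0 h_\nu^\alpha \lVert f \rVert_{W^{\alpha,2}(\I_1)}$, where $h_\nu$ is the largest gap between consecutive points. The quasi-uniformity condition \eqref{eq:quasi-uniformity} guarantees that $h_\nu \leq C \nu^{-1}$, whence $h_\nu^\alpha \leq C \nu^{-\alpha}$. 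The only remaining mismatch is that \eqref{eq:Wendland-Rieger} controls the error by the $W^{\alpha,2}(\I_1)$ norm whereas \eqref{eq:An0-generic} is stated with the $W^{\alpha,q}(\I_1)$ norm. Because $q \geq 2$ and $\I_1 = [0,1]$ has unit length, Hölder's inequality applied to each weak derivative gives $\lVert f \rVert_{W^{\alpha,2}(\I_1)} \leq \lVert f \rVert_{W^{\alpha,q}(\I_1)}$. Combining these three observations yields $\lVert f - A_\nu^{\I,1}(f) \rVert_{L^p(\I_1)} \leq C_0' \nu^{-\alpha} \lVert f \rVert_{W^{\alpha,q}(\I_1)}$, which is precisely \eqref{eq:An0-generic} with $\beta = \alpha$.

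With this verified, Theorem~\ref{Thm:optimal-generic} applies verbatim with $\beta = \alpha$: the choice $\nu_k = 2^{m-k}$ gives $n = 2(2^m-1)$ function values and the bound $\lVert f - \Ab_n(f) \rVert_{L^p_\rho} \leq C n^{-\alpha}$ with $C$ independent of $n$. I expect no genuine obstacle here; the only point requiring care is the passage between the $L^2$- and $L^q$-based Sobolev norms, which is harmless on an interval of finite length precisely because $q \geq 2$. It is worth noting that the hypothesis $1 \leq p < 2 \leq q$ of the corollary is exactly what is needed on both fronts: it supplies the condition $p < 2$ demanded by \eqref{eq:Wendland-Rieger} and the condition $q \geq 2$ used for the norm comparison, and it also ensures $p < q$ so that the ambient setup of Section~\ref{sec:optimal} remains in force.
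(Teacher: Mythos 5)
Your proposal is correct and takes essentially the same route as the paper: both rest on the Wendland--Rieger estimate~\eqref{eq:Wendland-Rieger}, the quasi-uniformity bound $h_\nu \leq C \nu^{-1}$, and an application of Theorem~\ref{Thm:optimal-generic} with $\beta = \alpha$, using $q \geq 2$ to pass to the $L^2$-based Sobolev norm. The only (cosmetic) difference is that the paper performs the $q \to 2$ reduction globally, via $W_\rho^{\alpha,q} \subseteq W_\rho^{\alpha,2}$ from finiteness of the Gaussian measure, whereas you do it per interval via H\"older on $\I_1$; note only that your comparison $\lVert f \rVert_{W^{\alpha,2}(\I_1)} \leq \lVert f \rVert_{W^{\alpha,q}(\I_1)}$ actually requires a factor $(\alpha+1)^{1/2-1/q}$ (the outer sums over derivatives are $\ell^2$ versus $\ell^q$ norms), which is harmless since constants may depend on $\alpha$ and $q$.
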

\begin{proof}
Because the Gaussian density function $\rho$ defines a finite measure on $\R$, we have $L^q_\rho \subseteq L^2_\rho$ and consequently $W_\rho^{\alpha,q} \subseteq W_\rho^{\alpha,2}$.
The claim then follows from~\eqref{eq:Wendland-Rieger} and the fact that for points that satisfy~\eqref{eq:quasi-uniformity} we have $h_\nu \leq C_h n^{-1}$ for a positive constant $C_h$ independent of $\nu$.
\qed \end{proof}

In fact, the smoothness of the Sobolev space that is used to construct the spline smoothers $s_{f,\nu,\lambda_\nu}$ does \emph{not} have to coincide with the smoothness of $f$.
It is a consequence of the escape theorem of Narcowich, Ward and Wendland~\cite[Theorem~4.2]{Narcowich2006} that the estimate~\eqref{eq:Wendland-Rieger} remains valid even when the Hilbert space $W(\I_1)$ is norm-equivalent to $W^{\tilde{\alpha}, 2}(\I_1)$ for $\tilde{\alpha} \geq \alpha$.
From this we obtain the following corollary similar in spirit to Corollary~\ref{cor:alpha-free}.

\begin{corollary}[Misspecified smoothness] \label{cor:smoothness-misspecification}
Let $\tilde{\alpha} \geq \alpha$ and consider the setting of Corollary~\ref{cor:spline-convergence} but suppose that the spline smoothers $s_{f,\nu,\lambda_\nu}$ are constructed using a Hilbert space $W(\I_1)$ norm-equivalent to $W^{\tilde{\alpha}, 2}(\I_n)$.
Then
\begin{equation*}
        \| f - \Ab_n(f) \|_{L^p_\rho} \leq C n^{-\alpha}
\end{equation*}
for any $f \in W_\rho^{\alpha, q}$, where the constant $C$ does not depend on $n$.
\end{corollary}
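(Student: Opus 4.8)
The plan is to reduce the statement to the generic machinery already established in Theorem~\ref{Thm:optimal-generic}, exactly as in the proof of Corollary~\ref{cor:spline-convergence}. The only place where the proof of Corollary~\ref{cor:spline-convergence} used the fact that the native space had smoothness matching $\alpha$ was the Wendland--Rieger bound~\eqref{eq:Wendland-Rieger}; every subsequent step is insensitive to how the spline smoother is built. Hence it suffices to re-establish the local estimate~\eqref{eq:An0-generic} with $\beta = \alpha$ for the misspecified smoother, and then feed it through the same chain as before.

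First I would invoke the escape theorem of Narcowich, Ward and Wendland~\cite[Theorem~4.2]{Narcowich2006} to upgrade~\eqref{eq:Wendland-Rieger}: even when $W(\I_1)$ is norm-equivalent to $W^{\tilde{\alpha},2}(\I_1)$ with $\tilde{\alpha} \geq \alpha$, the spline smoother $s_{f,\nu,\lambda_\nu}$ built from this smoother native space still satisfies $\| f - s_{f,\nu,\lambda_\nu} \|_{L^p(\I_1)} \leq C_0 h_\nu^{\alpha} \| f \|_{W^{\alpha,2}(\I_1)}$ for $1 \leq p < 2$ and $\lambda_\nu \leq \nu^{-2\alpha}$, with $C_0$ independent of $f$ and of the points. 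The decisive feature is that the rate is governed by the smoothness $\alpha$ of the approximand $f$ and \emph{not} by the larger smoothness $\tilde{\alpha}$ of the native space; this is precisely the content of ``escaping'' the native space. With this in hand, the quasi-uniformity~\eqref{eq:quasi-uniformity} gives $h_\nu \leq C_h \nu^{-1}$, so that $A_\nu^{\I,1}$ in~\eqref{eq:A1n-spline} obeys~\eqref{eq:An0-generic} with $\beta = \alpha$.

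From here the argument is a verbatim re-run of Corollary~\ref{cor:spline-convergence}. Translating the local bound to each interval $\I_k$ yields~\eqref{eq:Ank-generic}, and the choice $\nu_k = 2^{m-k}$ lets Theorem~\ref{Thm:optimal-generic} deliver $\| f - \Ab_n(f) \|_{L^p_\rho} \leq C n^{-\alpha}$ for the algorithm $\Ab_n$ in~\eqref{eq:An-optimal-def}. As before, the embedding $W_\rho^{\alpha,q} \subseteq W_\rho^{\alpha,2}$, valid because $\rho$ is a finite measure, ensures that the hypotheses apply to every $f \in W_\rho^{\alpha,q}$, and the smoothness $\tilde{\alpha}$ of the space defining the smoother never enters the final rate. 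I expect the main obstacle to be the careful application of the escape theorem rather than any new computation: one must check that its hypotheses hold (in particular $\alpha > 1/2$, which is automatic since $\alpha \in \N$, and the admissible range of the smoothing parameter $\lambda$) and, crucially, that the resulting constant $C_0$ is uniform over $\nu$ and over the translated intervals $\I_k$, since the generic bound~\eqref{eq:Ank-generic} requires a single constant independent of $f$, $\nu$ and $k$. Everything downstream of that verification is routine.
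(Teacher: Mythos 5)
Your proposal is correct and matches the paper's own justification: the paper likewise invokes the escape theorem of Narcowich, Ward and Wendland~\cite[Theorem~4.2]{Narcowich2006} to conclude that the estimate~\eqref{eq:Wendland-Rieger} remains valid for a native space norm-equivalent to $W^{\tilde{\alpha},2}(\I_1)$ with $\tilde{\alpha} \geq \alpha$, and then reruns the argument of Corollary~\ref{cor:spline-convergence} through Theorem~\ref{Thm:optimal-generic}. Your added attention to the uniformity of the constant over $\nu$ and the translated intervals $\I_k$ is a sensible point of care but does not change the route.
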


\begin{remark}[Fractional smoothness]
Because the results in~\cite{Arcangeli2007,Narcowich2006,WendlandRieger2005} that we have used apply also to fractional Sobolev spaces, Corollaries~\ref{cor:spline-convergence} and~\ref{cor:smoothness-misspecification} generalise to the fractional setting if the weighted Sobolev space $W^{\alpha,q}_\rho$ is defined appropriately for $\alpha \notin \mathbb{N}$.
\end{remark}

\begin{remark}[Higher dimensions]
    By replacing the unit intervals $\I_k$ with suitably selected unit cubes as in~\cite{DK2023}, Corollaries~\ref{cor:spline-convergence} and~\ref{cor:smoothness-misspecification} could be generalised also to the $d$-dimensional weighted isotropic Sobolev space $W_\rho^{\alpha, q}(\mathbb{R}^d)$ consisting of all functions whose Gaussian-weighted mixed weak derivatives of total order at most $\alpha$ are $q$-integrable.
    The estimate~\eqref{eq:Wendland-Rieger} remains valid when $\I_1$ is replaced with a $d$-dimensional unit cube $\I_1^d$ and $h_\nu$ with the $d$-dimensional fill-distance $\max_{\boldsymbol{x} \in \I_1^d} \min_{i=1,\ldots,\nu} \lVert \boldsymbol{x} - \boldsymbol{x}_i \rVert$, where $\lVert \cdot \rVert$ is the Euclidean norm.
\end{remark}

\section{Optimality}\label{sec:opt}
We conclude this paper by discussing optimality, i.e., general lower bounds for algorithms which are based on function values. For $p<q$, the optimal rate is derived by Kuo, Plaskota, and Wasilkowski \cite[Theorem~3]{KPW2016}. They consider a more general class of weights, which is applicable to ours. 
Their result states that the error of any algorithm $A_n$ that only uses $n$ function evaluation has the lower bound
\[
\sup_{\substack{f\in W^{\alpha,q}_\rho \\
\|f\|_{W^{\alpha,q}_\rho}\le1}} \|f-A_n(f)\|_{L_\rho^p} \ge C_{p,q,\alpha}n^{-\alpha},
\]
for $1\le p<q$. In this sense, our methods achieving error convergence $n^{-\alpha}$ are optimal.
In the same paper, they propose a piecewise Lagrange approximation that achieves the optimal rate \cite[Theorem~2]{KPW2016}.

Even though our algorithms do not cover the extreme case $p=q$, this has been recently studied by D\~{u}ng \cite[Theorem~3.4]{D2025}.
In this case the optimal rate degenerates to $n^{-\alpha/2}$. D\~{u}ng \cite{D2025} also covers extreme cases, such as $q=1$ or $q=\infty$, deriving the convergence rate $n^{-\alpha/2+(1-1/p)/2}$ for $q=1$ and $n^{-\alpha/2+1/(2p)}$ for $q=\infty$. These rates can be achieved by his B-spline algorithms \cite[Theorem~2.2 and 2.4]{D2025}.

In this paper, we focused on implementable and efficient algorithms achieving the optimal approximation rate. In practice, we often do not know the smoothness $\alpha$ of the target function and we have access to only function values. We explored the case when $\alpha$ is not known exactly in Corollaries~\ref{cor:alpha-free} and \ref{cor:smoothness-misspecification}. It is an interesting question of whether optimal algorithms can be constructed for the extreme cases of $(p,q)$ without knowledge of $\alpha$. We leave this to future study.

\section*{Acknowledgement}
The authors were supported by the Research Council of Finland (decisions 338567, 348503, 359181, 359183, and 368086).
TK acknowledges the research environment provided by ELLIS Institute Finland. We thank the anonymous reviewers for valuable comments.

\bibliographystyle{plain}
\bibliography{reference.bib}

\end{document}